\newcommand{\inv}{^{\raisebox{.2ex}{$\scriptscriptstyle-1$}}}   
\definecolor{brightmaroon}{rgb}{0.76, 0.13, 0.28}
\newtheorem{thm}{Theorem}[section]
\newtheorem{prop}[thm]{Proposition}
\newtheorem{lem}[thm]{Lemma}
\newtheorem{cor}[thm]{Corollary}
\newtheorem{rem}[thm]{Remark}
\author{A. Goswami}
\address{
[1] Department of Mathematics and Applied Mathematics, University of Johannesburg, P.O. Box 524, Auckland Park 2006, South Africa. [2] National Institute for Theoretical and Computational Sciences (NITheCS), South Africa.}
\email{agoswami@uj.ac.za}
\title{Is\'{e}ki spaces of semirings}
\date{}
\subjclass{16Y60.}
\keywords{semiring, strongly irreducibel ideal, radical, coarse lower topology, sober space, spectral space.}
\begin{document}
\begin{abstract}
The aim of this paper is to study Is\'{e}ki spaces of distinguished classes of ideals of a semiring
endowed with a topology. We show that every Is\'{e}ki space is quasi-compact whenever the semiring is Noetherian. We characterize Is\'{e}ki spaces for which every non-empty irreducible closed subset has a unique generic point. Furthermore, we provide a sufficient condition for the connectedness of Is\'{e}ki spaces and show that the strongly connectedness of an Is\'{e}ki space implies the existence of non-trivial idempotent elements of semirings.
\end{abstract}
\maketitle

\section{Introduction}
 
Since the introduction of semirings by \textsc{Vandiver} \cite{V34}, it is natural to compare and extend results from rings to semirings. One  may think that  semirings can always to be extended to rings, but \textsc{Vandiver} \cite{V39} gave examples of semirings which can not be embedded in rings. Moreover, semirings arise naturally when we consider the set of endomorphisms of a  commutative additive semigroup.

As for rings, various classes of ideals have been topologized with hull-kernel topology (also known as Stone topology or Jacobson topology, or Zariski topology) and the corresponding spaces have been studied intensively. It all started with \textsc{Stone} \cite{S37}, imposing hull-kernel topology on maximal ideals of a Boolean ring.  The same have been done on rings of continuous functions and commutative normed rings by \textsc{Gelfand \& Kolmogoroff}  \cite{GK39} (see also \cite{GJ60,S39}) and by \textsc{Gelfand \& \v{S}ilov} \cite{GS41} respectively, whereas for the algebra of all continuous complex-valued functions, \textsc{Loomis} \cite{L53} considered the same topology on maximal ideals. \textsc{Jacobson} \cite{J45} (see also \cite{J56}) considered that topology on the set of primitive ideals of an arbitrary ring and \textsc{Grothendieck} \cite{G60} used it on prime ideals of a commutative ring to construct affine schemes. The hull-kernel topology on minimal prime ideals of a commutative ring have been studied by \textsc{Henriksen \& Jerison} \cite{HJ65} (see also \textsc{Hochster} \cite{H71}). In a more recent paper, \textsc{Azizi} \cite{A08} endowed it on strongly irreducible ideals of a commutative ring. 

As far as semirings are concerned, \textsc{Is\'{e}ki} \cite{I56} considered various algebraic properties of distinguished classes of ideals and studied hull-kernel topology on strongly irreducible ideals. In \cite{IM56v}, \textsc{Is\'{e}ki \& Miyanaga} studied the same on maximal ideals of a semiring, whereas \textsc{Is\'{e}ki}  \cite{I56v} studied that topology on prime ideals and have called them structure spaces. In the context of $(m,n)$-semirings, \textsc{Hila}, \textsc{Kar}, \textsc{Kuka} \& \textsc{Naka} \cite{HKKN18} studied structure spaces of $n$-ary prime $k$-ideals, $n$-ary prime full $k$-ideals, $n$-ary prime ideals, maximal ideals, and strongly irreducible
ideals.

Note that we can not endow a hull-kernel topology on an arbitrary class of ideals of a ring or of a semiring. \textsc{McKnight} \cite[Section 2.2, p.\,11]{M53} characterized such classes of ideals for rings and the same characterization holds good for semirings. The closest topology which is of the hull-kernel-type is coarse lower topology and it can be imposed on any class of ideals. Moreover, this topology coincide with hull-kernel topology whenever the class of ideals is `good' enough. An attempt has been made in \textsc{Dube \& Goswami} \cite{DG22} to study this spaces (called ideal spaces) of all topologized classes of ideals of a commutative ring.       
 
Although the book \cite{G02} by \textsc{G\l azek} covers references on semirings at the encyclopedic level, but to best of author's knowledge, a study of all classes of topologized  ideals of a semiring has never been considered before. Our purpose of this paper is to generalize the notion of ideal spaces of commutative rings to Is\'{e}ki spaces (in honour of \textsc{Kiyoshi Is\'{e}ki}) of semirings.  We shall use definitions and  results, whenever applicable for semirings, from \textsc{Dube \& Goswami} \cite{DG22} (and also from \textsc{Finocchiaro, Goswami, \& Spirito} \cite{FGS22}) without explicitly referring to them.    
  
\section{Preliminaries}  

Recall from \textsc{Golan} \cite{G99} that a \emph{semiring} is a system $(\mathfrak S,+,0,\cdot, 1)$ such that $(\mathfrak S,+,0)$ is a commutative monoid, $(\mathfrak S, \cdot,1)$ is a monoid, $0r=0=r0$ for all $r\in \mathfrak S,$ and $\cdot$ distributes over $+$ both from the left and
from the right sides.
A semiring $\mathfrak S$ is called \emph{commutative} if $rr'=r'r$ for all $r,r'\in \mathfrak S.$ All our semirings are assumed to be commutative. A \emph{semiring homomorphism} $\phi\colon \mathfrak S\to \mathfrak S'$ is a map such that (i) $\phi(a+b)=\phi(a)+\phi(b),$ (ii) $\phi(ab)=\phi(a)\phi(b),$ and $\phi(1)=1$ for all $a, b\in \mathfrak S.$ A semiring homomorphism $\phi$ is called an \emph{isomorphism} if $\phi$ is also a bijection on the underlying sets.

An \emph{ideal} $\mathfrak{a}$ of a semiring $A$ is a nonempty proper subset of $\mathfrak S$ satisfying the conditions:
(i) $a+b\in \mathfrak{a}$ and
(ii) $ra\in \mathfrak{a}$
for all $a, b\in \mathfrak{a}$ and $r\in \mathfrak S.$ If $\{\mathfrak{a}_{\lambda}\}_{\lambda\in \Lambda}$ is a family of ideals of a semiring $\mathfrak S$, then $\bigcap_{\lambda\in \Lambda} \mathfrak{a}_{\lambda}$ is also an ideal of $\mathfrak S$. The \emph{sum} of a family $\{\mathfrak{a}_{\lambda}\}_{\lambda\in \Lambda}$ of ideals of a semiring $\mathfrak S$ is defined by
\begin{equation}\label{sum}
\sum_{\lambda\in \Lambda} \mathfrak{a}_{\lambda}=\left\{\sum_i^n a_{\lambda_i} \mid a_{\lambda_i} \in \mathfrak{a}_{\lambda_i}, n\in \mathds{N}\right\},
\end{equation} 
which is also an ideal of $\mathfrak S$. If $\mathfrak{a}$ and $\mathfrak{b}$ are two ideals of $\mathfrak S$, then their \emph{product} $\mathfrak{a}\mathfrak{b}$ is the ideal generated by the set $\{a\cdot b\mid a\in \mathfrak{a}, b\in \mathfrak{b}\}.$ As it has been pointed out by \textsc{Brown \& McCoy} \cite{BM58} that it does not matter whether the product $\mathfrak{a}\mathfrak{b}$ of ideals $\mathfrak{a}$ and $\mathfrak{b}$ is defined to be the set of all finite sums $\sum a_{\lambda} b_{\lambda}$ ($a_{\lambda}\in \mathfrak{a}$, $b_{\lambda}\in \mathfrak{b}$), or the smallest ideal of $\mathfrak S$ containing all products $a_{\lambda} b_{\lambda}$, or merely the set of all these products. For rings, \textsc{Behrens} \cite{B56} has used the second
of these definitions, whereas \textsc{Amitsur} \cite{A54} has applied the third. 

The \emph{radical} $\sqrt{\mathfrak{a}}$ of an ideal $\mathfrak{a}$ of a semiring $\mathfrak S$ is defined by 
\begin{equation}\label{rada}
\sqrt{\mathfrak{a}}=\{ r\in \mathfrak S\mid r^n\in \mathfrak{a}\;\text{for some}\; n\in \mathds{N}_{>0}\}.
\end{equation}
It is easy to verify that $\mathfrak{a}\subseteq \sqrt{\mathfrak{a}}$ and $\sqrt{\mathfrak{a}}$ is also an ideal of $\mathfrak S$. An ideal $\mathfrak{a}$ is called a \emph{radical ideal} if $\mathfrak{a}=\sqrt{\mathfrak{a}}.$  An ideal $\mathfrak{p}$ of a semiring $\mathfrak S$ is called \emph{prime} if $ab\in \mathfrak{p}$ implies $a\in \mathfrak{p}$ or $b\in \mathfrak{p}$ for all $a, b \in \mathfrak S.$ Likewise in rings, radicals of semirings also have the following important representation.

\begin{prop}[\cite{N18}]
If $\mathfrak{a}$ is an ideal of a semiring $\mathfrak S$, then 
\begin{equation}
\sqrt{\mathfrak{a}}=\bigcap \left\{ \mathfrak{p}\mid \mathfrak{a}\subseteq \mathfrak{p}\;\text{and}\;\; \mathfrak{p}\;\;\text{is a prime ideal of}\;\; \mathfrak S\right\}.
\end{equation} 
\end{prop}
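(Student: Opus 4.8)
The plan is to prove the two inclusions separately, the inclusion $\sqrt{\mathfrak a}\subseteq\bigcap\{\mathfrak p\}$ being straightforward and the reverse inclusion requiring a Zorn's lemma argument to manufacture a suitable prime. For $\sqrt{\mathfrak a}\subseteq\bigcap\{\mathfrak p\mid \mathfrak a\subseteq\mathfrak p,\ \mathfrak p\text{ prime}\}$ I would take $r\in\sqrt{\mathfrak a}$, so that $r^n\in\mathfrak a$ for some $n\in\mathds{N}_{>0}$, and let $\mathfrak p$ be any prime ideal with $\mathfrak a\subseteq\mathfrak p$. Then $r^n\in\mathfrak p$, and writing $r^n=r\cdot r^{n-1}$ and applying the defining property of a prime ideal repeatedly yields $r\in\mathfrak p$. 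Hence $r$ lies in every such $\mathfrak p$, giving the inclusion.

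For the reverse inclusion I would argue contrapositively: assuming $r\notin\sqrt{\mathfrak a}$, I produce a prime ideal $\mathfrak p$ with $\mathfrak a\subseteq\mathfrak p$ and $r\notin\mathfrak p$. Put $S=\{r^m\mid m\in\mathds{N}\}$, with the convention $r^0=1$. Because $r\notin\sqrt{\mathfrak a}$, no power $r^m$ with $m\ge 1$ lies in $\mathfrak a$, and $1\notin\mathfrak a$ since $\mathfrak a$ is proper; thus $S\cap\mathfrak a=\emptyset$. Consider the family $\Sigma$ of all ideals $\mathfrak b$ with $\mathfrak a\subseteq\mathfrak b$ and $\mathfrak b\cap S=\emptyset$, ordered by inclusion. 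It is nonempty, as $\mathfrak a\in\Sigma$, and the union of any chain in $\Sigma$ is again an ideal (the closure checks use only that the chain is totally ordered) that is disjoint from $S$ and contains $\mathfrak a$; so Zorn's lemma furnishes a maximal element $\mathfrak p\in\Sigma$, which is proper since $1\in S$.

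It remains to check that $\mathfrak p$ is prime, which is the crux. I would establish the contrapositive of the prime condition: assuming $a\notin\mathfrak p$ and $b\notin\mathfrak p$, I show $ab\notin\mathfrak p$. Since $1\in\mathfrak S$, the principal ideal generated by $a$ equals $\mathfrak S a$, so the subset $\mathfrak p+\mathfrak S a$ properly contains $\mathfrak p$; by maximality it cannot lie in $\Sigma$, and (whether or not it is proper) it therefore contains a power $r^m\in S$, say $r^m=p_1+sa$ with $p_1\in\mathfrak p$ and $s\in\mathfrak S$. Likewise $r^n=p_2+tb$ with $p_2\in\mathfrak p$ and $t\in\mathfrak S$. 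Expanding by distributivity,
\[
r^{m+n}=(p_1+sa)(p_2+tb)=p_1p_2+p_1(tb)+(sa)p_2+st(ab).
\]
If $ab\in\mathfrak p$, then every one of the four summands lies in $\mathfrak p$ (each is a multiple of an element of $\mathfrak p$ or of $ab$), so $r^{m+n}\in\mathfrak p\cap S$, contradicting $\mathfrak p\in\Sigma$. Hence $ab\notin\mathfrak p$, so $\mathfrak p$ is prime; and since $r=r^1\in S$ is disjoint from $\mathfrak p$, we get $r\notin\mathfrak p$, completing the contrapositive.

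The main obstacle is isolating $\mathfrak p$ and verifying its primality. The point specific to semirings is that the product expansion above uses only sums and products, never additive inverses, so the absence of subtraction causes no difficulty: each term of $(p_1+sa)(p_2+tb)$ is visibly absorbed into $\mathfrak p$ once $ab\in\mathfrak p$. A minor bookkeeping point, forced by the paper's convention that ideals are proper, is to phrase the maximality step as ``$\mathfrak p+\mathfrak S a$ contains an element of $S$'' rather than ``meets $S$'', since $\mathfrak p+\mathfrak S a$ could equal the whole semiring.
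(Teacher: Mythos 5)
Your proof is correct, but there is essentially nothing in the paper to compare it against: the paper states this proposition with a citation to \cite{N18} and gives no argument of its own. What you supply is the standard Krull-type proof transplanted to semirings: the easy inclusion by induction on the exponent using primality, and the hard inclusion by applying Zorn's lemma to the family $\Sigma$ of ideals containing $\mathfrak{a}$ and disjoint from the multiplicative set $S=\{r^m \mid m\in\mathds{N}\}$, then checking that a maximal element $\mathfrak{p}$ of $\Sigma$ is prime via the expansion $(p_1+sa)(p_2+tb)=p_1p_2+p_1tb+sap_2+stab$. Your two points of care are exactly the right ones for this setting. First, since the paper's ideals are by definition proper subsets, membership in $\Sigma$ silently includes properness; you correctly observe that this is automatic (any ideal disjoint from $S$ avoids $1=r^0$) and that $\mathfrak{p}+\mathfrak{S}a$ may fail to lie in $\Sigma$ merely by being improper, yet in either case it contains some power of $r$, which is all the argument needs. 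Second, the product expansion uses only distributivity and absorption into $\mathfrak{p}$, never additive inverses, which is precisely why Krull's ring-theoretic argument survives the passage to semirings. The trade-off between the two routes is the usual one: the paper's citation keeps its preliminaries short and defers correctness to \cite{N18}, while your argument makes the statement self-contained and documents that the only properties of $\mathfrak{S}$ used are commutativity, the multiplicative identity, and distributivity.
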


An ideal $\mathfrak{m}$ of a semiring $\mathfrak S$ is said to be \emph{maximal} if $\mathfrak{m}$ is not properly contained in any other ideals of $\mathfrak S$. 
\textsc{Slowikowski \& Zawadowski} \cite[Theorem 2]{SZ55} proved the following

\begin{prop}\label{maxe}
Every ideal of a semiring $\mathfrak S$ is a subset of at least one maximal ideal of $\mathfrak S$.
\end{prop}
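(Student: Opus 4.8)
The plan is to apply Zorn's Lemma to the collection of proper ideals containing the given one. Let $\mathfrak{a}$ be an ideal of $\mathfrak S$ and let $\Sigma$ denote the set of all ideals $\mathfrak{b}$ of $\mathfrak S$ satisfying $\mathfrak{a}\subseteq \mathfrak{b}$, partially ordered by inclusion. Since $\mathfrak{a}\in \Sigma$, the poset $\Sigma$ is nonempty, so the hypothesis of Zorn's Lemma on the existence of the ground set is met.

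The first genuinely structural step is to record that, by the very definition of an ideal as a \emph{proper} subset, no ideal of $\mathfrak S$ contains $1$: if $1\in \mathfrak{b}$ for some ideal $\mathfrak{b}$, then the absorption property gives $r=r\cdot 1\in \mathfrak{b}$ for every $r\in \mathfrak S$, forcing $\mathfrak{b}=\mathfrak S$ and contradicting properness. This observation is precisely what guarantees that unions of chains remain proper, and it is the one place where the multiplicative identity of the semiring is used.

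Next I would verify the chain condition. For a nonempty chain $\mathcal{C}\subseteq \Sigma$, set $\mathfrak{u}=\bigcup_{\mathfrak{c}\in \mathcal{C}}\mathfrak{c}$. Because $\mathcal{C}$ is totally ordered, any two elements $a,b\in \mathfrak{u}$ lie in a common member $\mathfrak{c}$ of the chain, whence $a+b\in \mathfrak{c}\subseteq \mathfrak{u}$, and $ra\in \mathfrak{c}\subseteq \mathfrak{u}$ for all $r\in \mathfrak S$; thus $\mathfrak{u}$ is closed under the two ideal operations. It is nonempty (it contains $\mathfrak{a}$) and proper since $1\notin \mathfrak{u}$ by the previous paragraph, so $\mathfrak{u}\in \Sigma$ is an upper bound for $\mathcal{C}$.

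Zorn's Lemma then yields a maximal element $\mathfrak{m}$ of $\Sigma$. To see that $\mathfrak{m}$ is maximal as an ideal of $\mathfrak S$, and not merely within $\Sigma$, suppose $\mathfrak{m}\subsetneq \mathfrak{b}$ for some ideal $\mathfrak{b}$; then $\mathfrak{a}\subseteq \mathfrak{m}\subseteq \mathfrak{b}$ places $\mathfrak{b}$ in $\Sigma$, contradicting the maximality of $\mathfrak{m}$. Hence $\mathfrak{m}$ is a maximal ideal with $\mathfrak{a}\subseteq \mathfrak{m}$, which is what we want. I expect the only load-bearing subtlety to be the properness of $\mathfrak{u}$ via the avoidance of $1$; the remainder is the standard Zorn's Lemma template, carried over verbatim from the ring-theoretic argument.
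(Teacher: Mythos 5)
Your proof is correct, but there is nothing in the paper to compare it against line by line: the paper does not prove this proposition, it simply quotes it as Theorem 2 of Slowikowski \& Zawadowski \cite{SZ55}. What your Zorn's Lemma argument adds is a self-contained verification that the classical ring-theoretic proof transfers verbatim to semirings under this paper's conventions, and you correctly isolate the two conventions that make it go through. First, the paper's semirings carry a multiplicative identity $1$ by definition, so your properness step $1\notin\mathfrak{u}$ for the union of a chain is available; this is indeed the only place the identity is needed, and it is why the proposition could fail for semirings without unit. Second, the paper defines an ideal to be a nonempty \emph{proper} subset of $\mathfrak{S}$, so every member of your poset $\Sigma$ is automatically proper, and a maximal element of $\Sigma$ is maximal among ideals exactly in the paper's sense (``not properly contained in any other ideal''). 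Under the more common convention of Golan, where $\mathfrak{S}$ itself is an ideal, you would instead take $\Sigma$ to consist of the \emph{proper} ideals containing $\mathfrak{a}$; the argument is otherwise unchanged. In short, your route buys self-containedness where the paper buys brevity by citation, and the proof you give is the standard and correct one.
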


Furthermore, by \textsc{Nasehpour} \cite{N18}, we have the following

\begin{prop}
Every maximal ideal of a semiring is a prime ideal.
\end{prop}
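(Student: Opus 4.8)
The plan is to argue by contradiction, exploiting the facts that a proper ideal cannot contain $1$ and that a maximal ideal enlarged by any element outside it must exhaust the whole semiring. First I would record two elementary observations. Every ideal contains $0$, since an ideal $\mathfrak{a}$ is nonempty and $0 = 0\cdot x \in \mathfrak{a}$ for any $x \in \mathfrak{a}$. Moreover no ideal contains $1$: if $1 \in \mathfrak{a}$ then $r = r\cdot 1 \in \mathfrak{a}$ for every $r \in \mathfrak{S}$, forcing $\mathfrak{a} = \mathfrak{S}$ and contradicting the requirement that an ideal be a proper subset. Now let $\mathfrak{m}$ be a maximal ideal and suppose, toward a contradiction, that $ab \in \mathfrak{m}$ while $a \notin \mathfrak{m}$ and $b \notin \mathfrak{m}$; the goal is to force $1 \in \mathfrak{m}$.

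The central step is to analyze the set $\mathfrak{m} + \mathfrak{S}a = \{\, m + ra \mid m \in \mathfrak{m},\ r \in \mathfrak{S}\,\}$. Using distributivity one checks directly that this set is closed under addition, namely $(m + ra) + (m' + r'a) = (m+m') + (r+r')a$, and under multiplication by an arbitrary $s \in \mathfrak{S}$, namely $s(m + ra) = sm + (sr)a$; it contains $\mathfrak{m}$ (take $r = 0$) and contains $a$ (take $m = 0$, $r = 1$). Thus it is the smallest ideal-type subset containing $\mathfrak{m}$ together with $a$. Since $a \notin \mathfrak{m}$, it strictly contains $\mathfrak{m}$. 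If it were a proper subset of $\mathfrak{S}$ it would be a genuine ideal properly larger than $\mathfrak{m}$, contradicting maximality; hence $\mathfrak{m} + \mathfrak{S}a = \mathfrak{S}$. In particular $1 = m_1 + r_1 a$ for some $m_1 \in \mathfrak{m}$ and $r_1 \in \mathfrak{S}$, and by the identical argument applied to $b$ we obtain $1 = m_2 + r_2 b$ for some $m_2 \in \mathfrak{m}$ and $r_2 \in \mathfrak{S}$.

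Finally I would multiply these two representations of $1$. Expanding by commutativity and distributivity,
\[
1 = (m_1 + r_1 a)(m_2 + r_2 b) = m_1 m_2 + m_1 r_2 b + m_2 r_1 a + r_1 r_2 (ab).
\]
Each of the four summands lies in $\mathfrak{m}$: the first three because each carries a factor $m_1$ or $m_2$ lying in $\mathfrak{m}$, and the last because $ab \in \mathfrak{m}$ by hypothesis. Since $\mathfrak{m}$ is closed under addition, it follows that $1 \in \mathfrak{m}$, contradicting the first observation. Therefore $a \in \mathfrak{m}$ or $b \in \mathfrak{m}$, and $\mathfrak{m}$ is prime.

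I do not anticipate a serious obstacle here; the result is standard for commutative rings. The only point requiring care is that the usual ring-theoretic proof must be reorganized to avoid subtraction, which is unavailable in a semiring. This is precisely why I obtain the contradiction by writing $1$ as the \emph{product} of the two representations $m_1 + r_1 a$ and $m_2 + r_2 b$ and absorbing every resulting term into $\mathfrak{m}$, rather than by manipulating differences such as $1 - r_1 a$.
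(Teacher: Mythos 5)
Your proof is correct. Note that the paper itself supplies no argument for this proposition: it is simply quoted from \textsc{Nasehpour} \cite{N18}, so there is no in-paper proof to match against; your write-up serves as a legitimate self-contained substitute. All the semiring-specific pitfalls are handled properly: in this paper ideals are by definition proper subsets, so your observation that no ideal contains $1$ is available; the set $\mathfrak{m}+\mathfrak{S}a$ is indeed closed under addition and under multiplication by elements of $\mathfrak{S}$ (commutativity is used for $s(ra)=(sr)a$), it contains $\mathfrak{m}$ and $a$, and maximality then forces $\mathfrak{m}+\mathfrak{S}a=\mathfrak{S}$, giving $1=m_1+r_1a$. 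The one place you work harder than necessary is the endgame: the second representation $1=m_2+r_2b$ is not needed. Multiplying the single identity $1=m_1+r_1a$ by $b$ gives
\[
b = m_1 b + r_1(ab) \in \mathfrak{m},
\]
since $m_1\in\mathfrak{m}$ and $ab\in\mathfrak{m}$; this shows directly that $ab\in\mathfrak{m}$ and $a\notin\mathfrak{m}$ force $b\in\mathfrak{m}$, with no contradiction argument at all, and this is the standard proof one finds in the semiring literature. Your two-representation product $(m_1+r_1a)(m_2+r_2b)$ achieves the same absorption of every term into $\mathfrak{m}$, so it is not an error, merely a slightly longer route; both versions correctly circumvent the subtraction step (forming $1-r_1a$) that the usual ring-theoretic proof would use and that is unavailable in a semiring.
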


A proper ideal $\mathfrak{a}$ of a semiring $\mathfrak S$ is called \emph{primary} (see \textsc{Golan} \cite{G99}) if $xy\in \mathfrak{a}$
implies either
$x\in \mathfrak{a}$ or
$y^n\in \mathfrak{a}$
for some $n\in \mathds{N}.$ By \textsc{Is\'{e}ki} \cite{I56}, an ideal $\mathfrak{s}$ is called \emph{strongly irreducible} (\emph{irreducible}), if 
$\mathfrak{a}\cap \mathfrak{b}\subseteq \mathfrak{s}$ ($\mathfrak{a}\cap \mathfrak{b}= \mathfrak{s}$) for any two ideals $\mathfrak{a},$ $\mathfrak{b}$ implies $\mathfrak{a}\subseteq \mathfrak{s}$ ($\mathfrak{a}=\mathfrak{s}$) or $ \mathfrak{b}\subseteq \mathfrak{s}$ ($\mathfrak{b}= \mathfrak{s}$). 
Any one of the classes of ideals of a semiring $\mathfrak S$ considered above will be called a \emph{spectrum} of $\mathfrak S$ and we shall denote it by $\sigma_{\mathfrak S}$. We shall assume that $\mathfrak S\notin \sigma_{\mathfrak S}$ for all spectra of $\mathfrak S$. 

\section{Is\'{e}ki spaces}

Suppose $\mathfrak{S}$ is a semiring. The ideal topology (also known as coarse lower topology \textsc{Dickmann}, \textsc{Schwartz}, \& \textsc{Tressal} \cite[A.8, p.\,589]{DST19} or lower topology \textsc{Gierz} \textit{et.\;\!al.} \cite[Definition O-5.4, p.\,43]{G et. al.}) on a spectrum $\sigma_{\mathfrak S}$ of $\mathfrak S$ will be the topology for which the sets of the type  
\begin{equation} \label{clop}
\{\mathfrak a \}^{\uparrow} =\{\mathfrak x\in \sigma_{\mathfrak S}\mid \mathfrak a\subseteq \mathfrak x \} 
\end{equation}

(where $\mathfrak a$ runs among the ideals of $\mathfrak S$) form a subbasis of closed sets. A spectrum $\sigma_{\mathfrak S}$ of  a semiring $\mathfrak S$ endowed with an ideal topology is called an \emph{Is\'{e}ki space}, and we denote the Is\'{e}ki  space also by $\sigma_{\mathfrak S}$.
The following results are easy to prove.

\begin{prop}\label{upp}
The subbasic closed subsets of an Is\'{e}ki space have the following properties.
\begin{enumerate}
\item The map $^{\uparrow}\colon \mathrm{Ideal}(\mathfrak{S})\to \mathcal{P}(\sigma_{\mathfrak{S}})$ defined in (\ref{clop}) is order-reversing, where $\mathrm{Ideal}(\mathfrak{S})$ denotes the set of all ideals of the semiring $\mathfrak{S}$ and $\mathcal{P}(\sigma_{\mathfrak{S}})$ is the power set of the spectrum $\sigma_{\mathfrak{S}}$. Furthermore, $\mathfrak{o}^{\uparrow}=\mathfrak{S}$ and $\mathfrak{S}^{\uparrow}=\emptyset$, where $\mathfrak{o}$ is the zero ideal of $\mathfrak{S}.$

\item \label{ihs} For any two ideals $\mathfrak{a},$ $\mathfrak{b}$ of $\mathfrak{S}$, $\mathfrak{a}^{\uparrow}\cup \mathfrak{b}^{\uparrow} \subseteq (\mathfrak{a}\cap \mathfrak{b})^{\uparrow}\subseteq (\mathfrak{ab})^{\uparrow}.$

\item\label{insum} For a family $\{\mathfrak{a}_{\alpha}^{\uparrow} \}_{\alpha \in \Lambda}$ of subbasic closed sets, $\bigcap_{\alpha \in \Lambda}\mathfrak{a}_{\alpha}^{\uparrow}=\left( \sum_{\alpha \in \Lambda}\mathfrak{a}_{\alpha} \right)^{\uparrow}$.

\item For every ideal $\mathfrak{a}$ of $\mathfrak{S},$ $\mathfrak{a}^{\uparrow}\supseteq(\sqrt{\mathfrak{a}})^{\uparrow},$ where $\sqrt{\mathfrak{a}}$ is as defined in (\ref{rada}).

\item A spectrum $\sigma_{\mathfrak{S}}$ is a radical ideal if and only if $\mathfrak{a}^{\uparrow}=(\sqrt{\mathfrak{a}})^{\uparrow}$ for every ideal $\mathfrak{a}$ in $\mathfrak{S}.$
\end{enumerate}
\end{prop}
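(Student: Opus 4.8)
The plan is to derive all five assertions from the single observation that the map $^{\uparrow}$ reverses inclusions, together with a few elementary facts about ideals of $\mathfrak S$; none of them requires more than unwinding the definition in (\ref{clop}). I would begin with the order-reversal in part~(1): if $\mathfrak a\subseteq \mathfrak b$ and $\mathfrak x\in \mathfrak b^{\uparrow}$, then $\mathfrak a\subseteq \mathfrak b\subseteq \mathfrak x$, so $\mathfrak x\in \mathfrak a^{\uparrow}$, whence $\mathfrak b^{\uparrow}\subseteq \mathfrak a^{\uparrow}$. For the two boundary values, the zero ideal $\mathfrak o$ lies inside every ideal, so $\mathfrak o^{\uparrow}=\sigma_{\mathfrak S}$; and since every $\mathfrak x\in \sigma_{\mathfrak S}$ is proper with $\mathfrak S\notin \sigma_{\mathfrak S}$, no member of $\sigma_{\mathfrak S}$ contains $\mathfrak S$, giving $\mathfrak S^{\uparrow}=\emptyset$.

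Assertions (2)--(4) are then immediate consequences of (1) and three containments between ideals. For (2), from $\mathfrak a\cap \mathfrak b\subseteq \mathfrak a$ and $\mathfrak a\cap \mathfrak b\subseteq \mathfrak b$ the order-reversing property gives $\mathfrak a^{\uparrow}\cup \mathfrak b^{\uparrow}\subseteq (\mathfrak a\cap \mathfrak b)^{\uparrow}$, while $\mathfrak{ab}\subseteq \mathfrak a\cap \mathfrak b$ (each generator $ab$ lies in both $\mathfrak a$ and $\mathfrak b$ because these are ideals and $\mathfrak S$ is commutative) yields $(\mathfrak a\cap \mathfrak b)^{\uparrow}\subseteq (\mathfrak{ab})^{\uparrow}$. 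For (3), the description (\ref{sum}) of the sum shows that the condition ``$\mathfrak a_\alpha\subseteq \mathfrak x$ for every $\alpha$'' is equivalent to $\sum_\alpha \mathfrak a_\alpha\subseteq \mathfrak x$: the forward direction uses closure of $\mathfrak x$ under finite sums, and the reverse uses $\mathfrak a_\alpha\subseteq \sum_\alpha \mathfrak a_\alpha$; reading this equivalence through the definition of $^{\uparrow}$ gives $\bigcap_\alpha \mathfrak a_\alpha^{\uparrow}=(\sum_\alpha \mathfrak a_\alpha)^{\uparrow}$. Finally (4) is just the instance $\mathfrak a\subseteq \sqrt{\mathfrak a}$ of (1).

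The only assertion carrying genuine content is (5), which I read as: every member of $\sigma_{\mathfrak S}$ is a radical ideal if and only if $\mathfrak a^{\uparrow}=(\sqrt{\mathfrak a})^{\uparrow}$ for all ideals $\mathfrak a$. Since (4) already supplies one inclusion, the forward direction needs only $\mathfrak a^{\uparrow}\subseteq (\sqrt{\mathfrak a})^{\uparrow}$: if $\mathfrak x\in \mathfrak a^{\uparrow}$ then $\mathfrak a\subseteq \mathfrak x$, so $\sqrt{\mathfrak a}\subseteq \sqrt{\mathfrak x}=\mathfrak x$ when $\mathfrak x$ is radical, giving $\mathfrak x\in (\sqrt{\mathfrak a})^{\uparrow}$. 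For the converse, the key step is to test the hypothesis at $\mathfrak a=\mathfrak x$ for an arbitrary $\mathfrak x\in \sigma_{\mathfrak S}$: as $\mathfrak x\in \mathfrak x^{\uparrow}=(\sqrt{\mathfrak x})^{\uparrow}$, we obtain $\sqrt{\mathfrak x}\subseteq \mathfrak x$, and the reverse inclusion is automatic, so $\mathfrak x$ is radical. I expect the main (mild) obstacle to be pinning down this self-referential substitution and checking that $\sqrt{\mathfrak x}$ is again a \emph{proper} ideal---which holds because $1\notin \mathfrak x$ forces $1\notin \sqrt{\mathfrak x}$---so that the argument stays inside the class of ideals on which $^{\uparrow}$ is defined.
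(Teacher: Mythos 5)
Your proof is correct; the paper offers no proof of this proposition at all (it is introduced with ``The following results are easy to prove''), and your routine unwinding of the definition (\ref{clop}) together with order-reversal is exactly the intended argument. You also resolved the two imprecisions in the statement in the right way: $\mathfrak{o}^{\uparrow}=\mathfrak{S}$ must be read as $\mathfrak{o}^{\uparrow}=\sigma_{\mathfrak{S}}$, and part (5) must be read as saying that every member of $\sigma_{\mathfrak{S}}$ is a radical ideal, with your self-substitution $\mathfrak{a}=\mathfrak{x}$ (and the check that $\sqrt{\mathfrak{x}}$ remains proper) handling the converse correctly.
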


It is evident from (\ref{clop}) that if $\mathfrak{x}\neq \mathfrak{x}'$ for any two $\mathfrak{x}, \mathfrak{x}'\in \sigma_{\mathfrak S}$, then $\mathfrak{x}^{\uparrow}\neq \mathfrak{x}'^{\uparrow}.$ Therefore, we have

\begin{prop}\label{t0a}
Every Is\'{e}ki space $\sigma_{\mathfrak S}$ is a $T_0$-space. 
\end{prop}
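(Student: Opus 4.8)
The plan is to verify the defining property of a $T_0$-space directly from the subbasic closed sets: given two distinct points of $\sigma_{\mathfrak S}$, I would exhibit an open set containing one of them but not the other. Since the points of an Is\'eki space are themselves ideals of $\mathfrak S$ lying in the distinguished class used to form $\sigma_{\mathfrak S}$, the key observation is that each such point $\mathfrak x$ may be used to index a subbasic closed set $\mathfrak x^{\uparrow}$, by taking the indexing ideal $\mathfrak a$ in (\ref{clop}) to be $\mathfrak x$ itself.

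First I would fix two distinct points $\mathfrak x, \mathfrak x' \in \sigma_{\mathfrak S}$. As these are distinct ideals of $\mathfrak S$, they cannot satisfy both $\mathfrak x \subseteq \mathfrak x'$ and $\mathfrak x' \subseteq \mathfrak x$ simultaneously; hence at least one of these containments fails, and without loss of generality I may assume $\mathfrak x \not\subseteq \mathfrak x'$, the opposite case being handled by interchanging the roles of $\mathfrak x$ and $\mathfrak x'$.

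Next I would consider the subbasic closed set $\mathfrak x^{\uparrow} = \{\mathfrak y \in \sigma_{\mathfrak S} \mid \mathfrak x \subseteq \mathfrak y\}$. By reflexivity of inclusion, $\mathfrak x \in \mathfrak x^{\uparrow}$, whereas the assumption $\mathfrak x \not\subseteq \mathfrak x'$ gives $\mathfrak x' \notin \mathfrak x^{\uparrow}$. Consequently the complement $\sigma_{\mathfrak S} \setminus \mathfrak x^{\uparrow}$ is an open set containing $\mathfrak x'$ but not $\mathfrak x$, which separates the two points in the required sense and thereby establishes the $T_0$ property.

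The argument presents no genuine obstacle; it is essentially the content of the remark preceding the statement, which records that distinct points $\mathfrak x \neq \mathfrak x'$ yield distinct subbasic closed sets $\mathfrak x^{\uparrow} \neq \mathfrak x'^{\uparrow}$. The only point meriting care is the legitimacy of indexing $\mathfrak x^{\uparrow}$ by the point $\mathfrak x$ itself, which is valid precisely because every spectrum $\sigma_{\mathfrak S}$ consists of ideals of $\mathfrak S$ and the map ${}^{\uparrow}$ in (\ref{clop}) accepts any ideal of $\mathfrak S$ as an index.
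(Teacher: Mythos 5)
Your proof is correct and is essentially the paper's own argument made explicit: the paper disposes of this proposition with the one-line remark that distinct points $\mathfrak{x}\neq\mathfrak{x}'$ of $\sigma_{\mathfrak S}$ yield distinct subbasic closed sets $\mathfrak{x}^{\uparrow}\neq\mathfrak{x}'^{\uparrow}$, which is exactly your observation that antisymmetry of inclusion lets one of the sets $\mathfrak{x}^{\uparrow}$, $\mathfrak{x}'^{\uparrow}$ contain one point and miss the other. Your additional step of passing to the open complement to exhibit the $T_0$ separation is just the routine unwinding that the paper leaves implicit.
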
 

Recall that a topological space is called \emph{quasi-compact} if every open cover of it has a finite subcover, or equivalently, the space satisfies the finite intersection property. In this definition of quasi-compactness, we do not assume the space is $T_2.$ Quasi-compactness of prime, maximal, and strongly irreducible ideals endowed with hull-kernel topology (= coarse lower topology) have respectively been shown in \textsc{Is\'{e}ki} \cite{I56v}, \textsc{Is\'{e}ki \& Miyanaga} \cite{IM56v}, and \textsc{Is\'{e}ki} \cite{I56}. The following result gives a sufficient condition for quasi-compactness of all Is\'{e}ki spaces. The proof relies on the application of Alexander subbase theorem. 

\begin{thm}\label{csb}
If a semiring $\mathfrak S$ has an identity and if a spectrum $\sigma_{\mathfrak S}$ of $\mathfrak S$ contains all maximal ideals of $\mathfrak S$, then $\sigma_{\mathfrak S}$ is a quasi-compact space. 
\end{thm}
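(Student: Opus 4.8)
The plan is to deduce quasi-compactness from the Alexander subbase theorem in its closed-set form: since the sets $\mathfrak{a}^{\uparrow}$ of (\ref{clop}) form a subbasis of closed sets for $\sigma_{\mathfrak S}$, it suffices to show that every family $\{\mathfrak{a}_\alpha^{\uparrow}\}_{\alpha\in\Lambda}$ of subbasic closed sets with the finite intersection property has $\bigcap_{\alpha\in\Lambda}\mathfrak{a}_\alpha^{\uparrow}\neq\emptyset$. So I would fix such a family and aim to produce a single point of $\sigma_{\mathfrak S}$ lying in every $\mathfrak{a}_\alpha^{\uparrow}$.

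The first step is to translate the finite intersection property into a statement about ideals. By Proposition \ref{upp}(\ref{insum}), for any finite $F\subseteq\Lambda$ we have $\bigcap_{\alpha\in F}\mathfrak{a}_\alpha^{\uparrow}=\bigl(\sum_{\alpha\in F}\mathfrak{a}_\alpha\bigr)^{\uparrow}$. The hypothesis that this is non-empty means there is some $\mathfrak{x}\in\sigma_{\mathfrak S}$ with $\sum_{\alpha\in F}\mathfrak{a}_\alpha\subseteq\mathfrak{x}$; since $\mathfrak{x}$ is a proper ideal and $\mathfrak S$ has an identity, this forces $1\notin\sum_{\alpha\in F}\mathfrak{a}_\alpha$. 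Thus the finite intersection property tells us precisely that every finite subsum $\sum_{\alpha\in F}\mathfrak{a}_\alpha$ is a proper ideal.

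The key step, and the one I expect to carry the real content, is to promote this from finite subsums to the full sum $\mathfrak{a}:=\sum_{\alpha\in\Lambda}\mathfrak{a}_\alpha$. Here the finiteness built into the definition (\ref{sum}) of an ideal sum is exactly what is needed: any element of $\mathfrak{a}$, in particular $1$ were it to lie in $\mathfrak{a}$, is a finite sum of elements drawn from finitely many of the $\mathfrak{a}_\alpha$, and hence already belongs to $\sum_{\alpha\in F}\mathfrak{a}_\alpha$ for some finite $F$. Since no finite subsum contains $1$, we conclude $1\notin\mathfrak{a}$, so $\mathfrak{a}$ is a proper ideal. This is where the identity hypothesis does its work, by letting properness be detected through the single element $1$.

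Finally, I would close the gap between ``proper ideal'' and ``point of the spectrum''. By Proposition \ref{maxe}, the proper ideal $\mathfrak{a}$ is contained in some maximal ideal $\mathfrak{m}$, and by hypothesis $\sigma_{\mathfrak S}$ contains all maximal ideals, so $\mathfrak{m}\in\sigma_{\mathfrak S}$. Since $\mathfrak{a}_\alpha\subseteq\mathfrak{a}\subseteq\mathfrak{m}$ for every $\alpha$, the point $\mathfrak{m}$ lies in $\mathfrak{a}_\alpha^{\uparrow}$ for all $\alpha$, that is, $\mathfrak{m}\in\bigcap_{\alpha\in\Lambda}\mathfrak{a}_\alpha^{\uparrow}$ (equivalently, applying Proposition \ref{upp}(\ref{insum}) to the whole family, $\mathfrak{m}\in\mathfrak{a}^{\uparrow}$). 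The intersection is therefore non-empty, and Alexander's subbase theorem yields that $\sigma_{\mathfrak S}$ is quasi-compact.
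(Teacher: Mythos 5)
Your proof is correct and is essentially the paper's own argument, just run in the direct rather than the contrapositive direction: the paper assumes an empty intersection and derives an empty finite intersection, while you assume the finite intersection property and produce a common point. Both rest on the same ingredients --- Alexander's subbase theorem, the identity $\bigcap_{\alpha}\mathfrak{a}_{\alpha}^{\uparrow}=(\sum_{\alpha}\mathfrak{a}_{\alpha})^{\uparrow}$, detection of properness via the element $1$ and the finiteness of ideal sums, and Proposition \ref{maxe} together with the hypothesis that $\sigma_{\mathfrak S}$ contains all maximal ideals.
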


\begin{proof}
Suppose  $\{\mathcal K_{ \lambda}\}_{\lambda \in \Lambda}$ is a family of subbasic closed sets of $\sigma_{\mathfrak S}$  with $\bigcap_{\lambda\in \Lambda}\mathcal K_{ \lambda}=\emptyset.$ This implies  
$\mathcal K_{ \lambda}=\mathfrak{x}_{\lambda}^{\uparrow}$ for some ideals $\mathfrak{x}_{\lambda}$ of $\mathfrak S$, and 
$$\bigcap_{\lambda \in \Lambda}\mathfrak{x}_{\lambda}^{\uparrow}=\left(\sum_{\lambda \in \Lambda} \mathfrak{x}_{\lambda}  \right)^{\uparrow}=\emptyset, $$
where the sum is defined in (\ref{sum}), and the first equality follows from Proposition \ref{upp} (\ref{insum}).
If $\sum_{\lambda \in \Lambda} \mathfrak{x}_{\lambda}  \neq \mathfrak S,$ then we must have a maximal ideal $\mathfrak{m}$ of $\mathfrak S$ (see Proposition \ref{maxe}) such that $\sum_{\lambda \in \Lambda} \mathfrak{x}_{\lambda}  \subseteq \mathfrak{m}.$ Moreover, 
$$ \mathfrak{x}_{\lambda}  \subseteq \sum_{\lambda \in \Lambda} \mathfrak{x}_{\lambda}  \subseteq \mathfrak{m},$$
for all $\lambda \in \Lambda.$ Therefore $\mathfrak{m}\in \mathfrak{x}_{\lambda}^{\uparrow}=\mathcal K_{\lambda}$ for all $\lambda \in \Lambda$, a contradiction of our assumption. Hence $\sum_{\lambda \in \Lambda} \mathfrak{x}_{\lambda}=\mathfrak S,$ and the identity $1\in \sum_{\lambda \in \Lambda} \mathfrak{x}_{\lambda}.$ This implies the existence of a finite subset $\{\lambda_{\scriptscriptstyle 1}, \ldots, \lambda_{\scriptscriptstyle n}\}$ of $\Lambda$ such that $1= \sum_{i=1}^n x_{\lambda_i}$ (where $x_{\lambda_i}\in \mathfrak{x}_{\lambda_i}$), and hence  $R= \sum_{i=1}^n \mathfrak{x}_{\lambda_i},$ which establishes the finite intersection property. Therefore, $\sigma_{\mathfrak S}$ is quasi-compact by Alexander's subbase theorem.
\end{proof} 

\begin{cor}
The Is\'{e}ki spaces of maximal, prime, strongly irreducible, primary, irreducible, radical ideals are all quasi-compact.
\end{cor}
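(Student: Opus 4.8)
The plan is to deduce this corollary as a direct application of Theorem~\ref{csb}. Every semiring considered in the paper carries an identity by definition, so the first hypothesis of that theorem is satisfied automatically for each of the six spectra in the statement. It therefore remains only to check, case by case, that each of these classes of ideals contains all the maximal ideals of $\mathfrak{S}$; quasi-compactness then follows at once.

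For the spectrum of maximal ideals the containment is trivial, and for the spectrum of prime ideals it is exactly the assertion (recorded above from \textsc{Nasehpour}) that every maximal ideal is prime. I would reduce the four remaining cases to the prime case by showing that every prime ideal $\mathfrak{p}$ is at the same time primary, radical, strongly irreducible, and irreducible; since every maximal ideal is prime, it then lies in each of these four spectra as well.

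Each of these implications is short. A prime $\mathfrak{p}$ is primary because $xy\in\mathfrak{p}$ already forces $x\in\mathfrak{p}$ or $y\in\mathfrak{p}$, and the latter gives $y^{1}\in\mathfrak{p}$; it is radical because $r^{n}\in\mathfrak{p}$ yields $r\in\mathfrak{p}$ by induction on $n$ using primeness, whence $\sqrt{\mathfrak{p}}=\mathfrak{p}$. For strong irreducibility, suppose $\mathfrak{a}\cap\mathfrak{b}\subseteq\mathfrak{p}$; the standard inclusion $\mathfrak{ab}\subseteq\mathfrak{a}\cap\mathfrak{b}$ (the same inclusion underlying Proposition~\ref{upp}(\ref{ihs})) gives $\mathfrak{ab}\subseteq\mathfrak{p}$, and primeness then forces $\mathfrak{a}\subseteq\mathfrak{p}$ or $\mathfrak{b}\subseteq\mathfrak{p}$. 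Finally, every strongly irreducible ideal $\mathfrak{s}$ is irreducible: if $\mathfrak{a}\cap\mathfrak{b}=\mathfrak{s}$ then in particular $\mathfrak{a}\cap\mathfrak{b}\subseteq\mathfrak{s}$, so $\mathfrak{a}\subseteq\mathfrak{s}$ or $\mathfrak{b}\subseteq\mathfrak{s}$, while the reverse inclusions $\mathfrak{s}\subseteq\mathfrak{a}$ and $\mathfrak{s}\subseteq\mathfrak{b}$ hold automatically, forcing equality in one of the two.

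With all six containments established, Theorem~\ref{csb} applies verbatim and yields quasi-compactness in every case. I do not anticipate any genuine obstacle; the only step meriting a moment's care is the passage from primeness to strong irreducibility, where one must argue through the product $\mathfrak{ab}$ rather than elementwise, together with the observation that strong irreducibility is formally stronger than irreducibility.
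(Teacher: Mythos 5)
Your proof is correct and follows exactly the route the paper intends: the corollary is stated as an immediate consequence of Theorem~\ref{csb}, the point being that each of the six classes contains all maximal ideals (maximal ideals being prime, and prime ideals being primary, radical, and strongly irreducible, hence also irreducible). Your case-by-case verification of these containments simply makes explicit what the paper leaves unstated, so there is nothing to add.
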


\begin{rem}
\emph{In a $(m,n)$-semiring, our coarse lower topology coincide with the hull-kernel topology on $n$-ary prime full $k$-ideals, $n$-ary prime ideals, and strongly irreducible
ideals, and hence, we obtain quasi-compactness (see respectively \cite[Theorem 3.1, Theorem 5.4, and Theorem 6.2]{HKKN18}).}
\end{rem}

Notice that in Theorem
\ref{csb}
the containment of all maximal ideals to an arbitrary spectrum $\sigma_{\mathfrak S}$ is a sufficient condition for quasi-compactness of the Is\'{e}ki space $\sigma_{\mathfrak S}$. For the spectrum of finitely generated ideals of a semiring, for instants, it is also a necessary condition.

\begin{prop}
If the Is\'{e}ki space $\sigma_{\mathfrak S}$ of finitely generated proper ideals is quasi-compact then $\sigma_{\mathfrak S}$ contains all maximal ideals of $\mathfrak S.$ 
\end{prop}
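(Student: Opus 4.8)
The plan is to argue one maximal ideal at a time, by contraposition. I would fix an arbitrary maximal ideal $\mathfrak m$ of $\mathfrak S$ and show that if $\mathfrak m\notin\sigma_{\mathfrak S}$, then $\sigma_{\mathfrak S}$ cannot be quasi-compact. Because here $\sigma_{\mathfrak S}$ is the spectrum of finitely generated \emph{proper} ideals, the statement ``$\mathfrak m\in\sigma_{\mathfrak S}$'' is literally the statement ``$\mathfrak m$ is finitely generated'' (properness being automatic for a maximal ideal). So it suffices to produce, from the failure $\mathfrak m\notin\sigma_{\mathfrak S}$, a family of subbasic closed sets witnessing non-quasi-compactness, and then running the contrapositive gives that every maximal ideal is finitely generated, i.e. lies in $\sigma_{\mathfrak S}$.

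First I would resolve $\mathfrak m$ into principal pieces. Writing $(a)$ for the principal ideal generated by $a$, one has $\sum_{a\in\mathfrak m}(a)=\mathfrak m$, so Proposition \ref{upp}(\ref{insum}) gives
\[
\bigcap_{a\in\mathfrak m}(a)^{\uparrow}=\left(\sum_{a\in\mathfrak m}(a)\right)^{\uparrow}=\mathfrak m^{\uparrow}.
\]
Next I would evaluate $\mathfrak m^{\uparrow}$ \emph{inside the present spectrum}. Since $\mathfrak m$ is maximal, the only proper ideal of $\mathfrak S$ containing $\mathfrak m$ is $\mathfrak m$ itself; and as $\mathfrak m$ is assumed not finitely generated, it is not a point of $\sigma_{\mathfrak S}$. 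Hence $\mathfrak m^{\uparrow}=\{\mathfrak x\in\sigma_{\mathfrak S}\mid \mathfrak m\subseteq\mathfrak x\}=\emptyset$, so $\{(a)^{\uparrow}\}_{a\in\mathfrak m}$ is a family of (subbasic, hence genuine) closed subsets of $\sigma_{\mathfrak S}$ whose total intersection is empty.

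Finally I would invoke quasi-compactness in its closed-set form: a space is quasi-compact if and only if every family of closed sets with empty intersection has a finite subfamily with empty intersection (this is just the contrapositive of the finite intersection property, so Alexander's subbase theorem is not even required). Applied to the family above, this would yield $a_1,\dots,a_n\in\mathfrak m$ with $\bigcap_{i=1}^{n}(a_i)^{\uparrow}=\emptyset$, that is, $(a_1,\dots,a_n)^{\uparrow}=\emptyset$ by another application of Proposition \ref{upp}(\ref{insum}). But $\mathfrak b:=(a_1,\dots,a_n)$ is finitely generated and, being contained in $\mathfrak m\subsetneq\mathfrak S$, is proper; thus $\mathfrak b\in\sigma_{\mathfrak S}$ and $\mathfrak b\in\mathfrak b^{\uparrow}$, forcing $\mathfrak b^{\uparrow}\neq\emptyset$ — a contradiction. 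I expect this last step to be the only delicate point: one must observe that a finite sum of the chosen principal ideals is again a \emph{proper finitely generated} ideal and therefore re-enters the spectrum as its own point. This is exactly where the non-finite-generation of $\mathfrak m$ is used (it guarantees the finite sum never exhausts $\mathfrak m$, so it stays proper) and where the defining property of this particular spectrum — that its points are precisely the finitely generated proper ideals — does the essential work; the remainder is routine translation between open covers and vanishing intersections of the $(\,\cdot\,)^{\uparrow}$ sets.
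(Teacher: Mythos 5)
Your proposal is correct and is essentially the paper's own argument: both take the family of subbasic closed sets $\{(a)^{\uparrow}\}_{a\in\mathfrak m}$ attached to a putative non-finitely-generated maximal ideal $\mathfrak m$, show its total intersection is empty (any point of $\sigma_{\mathfrak S}$ containing $\mathfrak m$ would be a finitely generated proper ideal strictly above the maximal ideal $\mathfrak m$, which is impossible), and observe that every finite subfamily has non-empty intersection because $(a_1,\dots,a_n)$ is itself a point of the spectrum lying in it, contradicting quasi-compactness; your version merely makes explicit, via the identity $\bigcap_{a\in\mathfrak m}(a)^{\uparrow}=\bigl(\sum_{a\in\mathfrak m}(a)\bigr)^{\uparrow}=\mathfrak m^{\uparrow}$ and the explicit witness for the finite intersection property, two steps the paper leaves terse. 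One small slip in your closing commentary: the properness of $(a_1,\dots,a_n)$ follows simply from its being contained in the proper ideal $\mathfrak m$, not from $\mathfrak m$ failing to be finitely generated --- that hypothesis is needed only to conclude $\mathfrak m^{\uparrow}\cap\sigma_{\mathfrak S}=\emptyset$.
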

 
\begin{proof}
Suppose $\mathfrak m$ is a maximal ideal of the semiring $\mathfrak S$ such that $\mathfrak m$ is not finitely generated. Let us consider the collection of closed subspaces: $$\mathcal C=\left\{\langle x\rangle^\uparrow\bigcap \sigma_{\mathfrak S}\mid x\in \mathfrak m \right\}.$$  We claim that $\bigcap \mathcal C=\emptyset$. If not, let $\mathfrak y\in \bigcap \mathcal C$. Then $\mathfrak b$ is finitely generated and $\mathfrak m\subseteq \mathfrak y$. Since $\mathfrak m$ is not a finitely generated ideal, we must have $\mathfrak y\supsetneq \mathfrak m,$ and that implies $\mathfrak y=\mathfrak S$, which contradicts the fact that $\sigma_{\mathfrak S}$ consists of proper ideals. But clearly $\mathcal C$ has the finite intersection property and hence $\sigma_{\mathfrak S}$ is not quasi-compact.
\end{proof}

If $\mathfrak{S}$ is a Noetherian semiring, then we have the following result which may considered as a semiring-theoretic version of the corresponding result proved in \textsc{Finocchiaro}, \textsc{Goswami}, \& \textsc{Spirito},  \cite{FGS22} for rings. 
 
\begin{thm}
If $\mathfrak S$ is a Noetherian semiring then every Is\'{e}ki space $\sigma_{\mathfrak{S}}$ is quasi-compact.
\end{thm}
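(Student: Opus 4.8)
The plan is to mirror the proof of Theorem~\ref{csb} and invoke Alexander's subbase theorem, so that it suffices to reason about subbasic closed sets: I will show that whenever a family $\{\mathfrak{x}_{\lambda}^{\uparrow}\}_{\lambda \in \Lambda}$ of subbasic closed sets has empty intersection, some \emph{finite} subfamily already has empty intersection. Passing to complements, this is precisely the condition Alexander's theorem demands for quasi-compactness, and unlike in Theorem~\ref{csb} it will be supplied by the Noetherian hypothesis alone, with no assumption that $\sigma_{\mathfrak S}$ contains the maximal ideals.

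So suppose $\bigcap_{\lambda \in \Lambda}\mathfrak{x}_{\lambda}^{\uparrow}=\emptyset$. By Proposition~\ref{upp}(\ref{insum}) this rewrites as $\left(\sum_{\lambda \in \Lambda}\mathfrak{x}_{\lambda}\right)^{\uparrow}=\emptyset$; write $\mathfrak{a}=\sum_{\lambda \in \Lambda}\mathfrak{x}_{\lambda}$. The decisive step is to replace this possibly infinite sum by a finite one, and this is where being Noetherian enters: $\mathfrak{a}$ is a finitely generated ideal (or $\mathfrak{a}=\mathfrak{S}$, generated by $1$). I fix generators $g_{1},\ldots,g_{m}$ of $\mathfrak{a}$; by the explicit description of the sum in (\ref{sum}), each $g_{j}$ is a finite sum of elements drawn from finitely many of the $\mathfrak{x}_{\lambda}$, so there is a finite index set $F_{j}\subseteq \Lambda$ with $g_{j}\in \sum_{\lambda \in F_{j}}\mathfrak{x}_{\lambda}$. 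Putting $F=\bigcup_{j=1}^{m}F_{j}$, still finite, all generators lie in $\sum_{\lambda \in F}\mathfrak{x}_{\lambda}$.

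Since $\sum_{\lambda \in F}\mathfrak{x}_{\lambda}$ is an ideal of $\mathfrak{S}$ containing $g_{1},\ldots,g_{m}$, it contains the ideal they generate, so $\mathfrak{a}\subseteq \sum_{\lambda \in F}\mathfrak{x}_{\lambda}$; the reverse inclusion is immediate, whence $\mathfrak{a}=\sum_{\lambda \in F}\mathfrak{x}_{\lambda}$. Applying Proposition~\ref{upp}(\ref{insum}) once more, now to the finite family indexed by $F$, gives $\bigcap_{\lambda \in F}\mathfrak{x}_{\lambda}^{\uparrow}=\left(\sum_{\lambda \in F}\mathfrak{x}_{\lambda}\right)^{\uparrow}=\mathfrak{a}^{\uparrow}=\emptyset$. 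Thus the finite subfamily $\{\mathfrak{x}_{\lambda}^{\uparrow}\}_{\lambda \in F}$ has empty intersection, and Alexander's subbase theorem yields quasi-compactness of $\sigma_{\mathfrak S}$.

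I expect the only point needing genuine care to be the middle step, the reduction of the infinite sum to a finite one. It is tempting to say merely that $\mathfrak{a}$ is finitely generated and stop, but one must actually track that the finitely many generators are \emph{supported on finitely many indices} $\lambda$, which is exactly what the concrete form (\ref{sum}) of a sum of ideals provides. An equivalent and perhaps cleaner phrasing uses the ascending chain condition directly: the directed family $\{\sum_{\lambda \in F}\mathfrak{x}_{\lambda}\mid F\subseteq \Lambda\ \text{finite}\}$ admits a maximal member by the Noetherian hypothesis, and directedness forces that maximal member to equal $\mathfrak{a}$. Either route closes the argument.
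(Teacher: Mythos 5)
Your proof is correct and follows essentially the same route as the paper's: both use the Noetherian hypothesis to write $\sum_{\lambda\in\Lambda}\mathfrak{x}_{\lambda}$ as a finite sum by tracking finitely many generators to finitely many indices, apply Proposition~\ref{upp}(\ref{insum}), and conclude via Alexander's subbase theorem. The only difference is that you phrase the criterion contrapositively (empty intersection forces a finite subfamily with empty intersection) while the paper works with the finite intersection property directly, which is logically the same thing.
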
   

\begin{proof}
Consider a collection $\{\sigma_{\mathfrak{S}}\cap \mathfrak x_{\lambda}^{\uparrow} \}_{\lambda\in \Omega}$ of subbasic closed sets of $\sigma_{\mathfrak{S}}$ with the finite intersection property. By assumption, the ideal $\mathfrak  y=\sum_{\lambda\in \Omega}\mathfrak x_{\lambda}$ is finitely generated, say $\mathfrak y=(\alpha_1,\ldots,\alpha_n)$. For  every $1\leqslant j\leqslant n$, there exists a finite subset $\Lambda_j$ of $\Omega$ such that $\alpha_j\in \sum_{\lambda\in \Lambda_j}\mathfrak x_{\lambda}$. Thus, if  $\Lambda:=\bigcup_{j=1}^n\Lambda_j$, it immediately follows that $\mathfrak y=\sum_{\lambda\in \Lambda}\mathfrak x_{\lambda}$. Hence we have
\begin{align*} 
\bigcap_{\lambda\in \Omega}\left(\sigma_{\mathfrak{S}}\cap \mathfrak x_{\lambda}^{\uparrow}\right)&=\sigma_{\mathfrak{S}}\cap \mathfrak y^{\uparrow}\\&=\sigma_{\mathfrak{S}}\bigcap  \left(\sum_{\lambda\in \Lambda}\mathfrak x_{\lambda} \right)^{\uparrow}\\&= \bigcap_{\lambda\in \Lambda}\left(\sigma_{\mathfrak{S}}\cap \mathfrak x_{\lambda}^{\uparrow}\right) \neq \emptyset,
\end{align*}
since $\Lambda$ is finite and $\{\sigma_{\mathfrak{S}}\cap \mathfrak x_{\lambda}^{\uparrow} \}_{\lambda\in \Omega}$ has the finite intersection property. Then the conclusion follows by the Alexander subbasis Theorem. 
\end{proof}

\begin{cor}
If $\mathfrak S$ is a Noetherian semiring then every Is\'{e}ki space $\sigma_{\mathfrak{S}}$ is  Noetherian.
\end{cor}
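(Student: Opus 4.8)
The plan is to deduce the Noetherian property from the standard topological fact that a space is Noetherian precisely when every one of its subspaces is quasi-compact (equivalently, when every open subset is quasi-compact). Accordingly, I would fix an arbitrary subset $Y\subseteq \sigma_{\mathfrak S}$, equip it with the subspace topology, and show that $Y$ is quasi-compact; the conclusion then follows at once.

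The key observation is that the argument of the preceding theorem survives restriction to $Y$ essentially verbatim, because intersecting with $Y$ commutes with the intersection of subbasic closed sets. Concretely, the sets $Y\cap \mathfrak a^{\uparrow}$ (with $\mathfrak a$ ranging over the ideals of $\mathfrak S$) form a subbasis of closed sets for the subspace topology on $Y$, so by Alexander's subbase theorem it suffices to check that every subfamily of these with the finite intersection property has nonempty intersection. Thus I would start from a family $\{Y\cap \mathfrak x_{\lambda}^{\uparrow}\}_{\lambda\in \Omega}$ with the finite intersection property and set $\mathfrak y=\sum_{\lambda\in \Omega}\mathfrak x_{\lambda}$.

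Here the Noetherian hypothesis enters: $\mathfrak y$ is finitely generated, say $\mathfrak y=(\alpha_1,\ldots,\alpha_n)$, and each generator $\alpha_j$ already lies in $\sum_{\lambda\in \Lambda_j}\mathfrak x_{\lambda}$ for some finite $\Lambda_j\subseteq \Omega$; taking $\Lambda=\bigcup_{j=1}^{n}\Lambda_j$ yields a finite set with $\mathfrak y=\sum_{\lambda\in \Lambda}\mathfrak x_{\lambda}$. Using Proposition \ref{upp}\,(\ref{insum}) to rewrite the intersections of the $\mathfrak x_{\lambda}^{\uparrow}$ as $\left(\sum \mathfrak x_{\lambda}\right)^{\uparrow}$, I would then compute
$$\bigcap_{\lambda\in \Omega}\left(Y\cap \mathfrak x_{\lambda}^{\uparrow}\right)=Y\cap \mathfrak y^{\uparrow}=\bigcap_{\lambda\in \Lambda}\left(Y\cap \mathfrak x_{\lambda}^{\uparrow}\right),$$
and the right-hand side is nonempty because $\Lambda$ is finite and the family has the finite intersection property. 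Hence $Y$ is quasi-compact, and since $Y$ was arbitrary, $\sigma_{\mathfrak S}$ is Noetherian.

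I do not anticipate a genuine obstacle, since the substantive content already resides in the previous theorem; the only points requiring care are (i) verifying that $\{Y\cap \mathfrak a^{\uparrow}\}$ really is a closed subbasis for the subspace topology, so that Alexander's theorem is applicable to $Y$, and (ii) the passage to the finite index set $\Lambda$, which is exactly where Noetherianity is used and which is left undisturbed by intersecting with $Y$.
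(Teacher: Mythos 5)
Your proof is correct and follows essentially the same route the paper intends for this corollary: deduce Noetherianity from the standard fact that a space is Noetherian exactly when every subspace is quasi-compact, with the Noetherian hypothesis entering through the finite generation of $\mathfrak y=\sum_{\lambda\in\Omega}\mathfrak x_{\lambda}$ just as in the preceding theorem. The only available shortcut is that you need not rerun that argument on $Y$: any subset $Y\subseteq\sigma_{\mathfrak S}$ is itself a spectrum whose ideal topology coincides with the subspace topology (since $Y\cap\mathfrak a^{\uparrow}$ is exactly the subbasic closed set of $Y$ determined by $\mathfrak a$), so the theorem applies to $Y$ verbatim.
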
 
 
Next we consider the separation axiom of sobriety. Recall that a topological space $X$ is called \emph{sober} if every non-empty irreducible closed subset $\mathcal K$ of $X$ is of the form: $\mathcal K=\mathrm{Cl}(\{x\})$, the closure of an unique singleton set $\{x\}$. The following result characterizes sober Is\'{e}ki spaces of a semiring.

\begin{thm}
\label{sob}  
An Is\'{e}ki space $\sigma_{\mathfrak S}$ is sober if and only if
\begin{equation}\label{soc}
\mathfrak{a}^{\uparrow}\supseteq \bigcap_{\substack{\mathfrak{a}\subseteq \mathfrak{x} \\ \mathfrak{x}\in \sigma_{\mathfrak S}}}  \mathfrak{x}^{\uparrow}
\end{equation}
for every non-empty irreducible subbasic closed subset of $\mathfrak{a}^{\uparrow}$ of $\sigma_{\mathfrak S}$.
\end{thm}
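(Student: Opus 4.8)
The plan is to analyse the topology through the specialization order induced by the maps $^{\uparrow}$, and then to reduce the soberness condition, which a priori concerns \emph{all} irreducible closed sets, to the subbasic ones $\mathfrak{a}^{\uparrow}$ appearing in (\ref{soc}). First I would record the two structural facts on which everything rests. For a point $\mathfrak{x}\in\sigma_{\mathfrak S}$ the set $\mathfrak{x}^{\uparrow}$ is a subbasic closed set containing $\mathfrak{x}$; and if $C$ is any closed set with $\mathfrak{x}\in C$, then writing $C$ as an intersection of finite unions of subbasic closed sets and using that $^{\uparrow}$ is order-reversing (Proposition \ref{upp}), one gets $\mathfrak{x}^{\uparrow}\subseteq C$. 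Hence $\mathrm{Cl}(\{\mathfrak{x}\})=\mathfrak{x}^{\uparrow}$, so the specialization order is inclusion of ideals and every closed set is an up-set: if $\mathfrak{x}\in C$ and $\mathfrak{x}\subseteq\mathfrak{y}$ with $\mathfrak{y}\in\sigma_{\mathfrak S}$, then $\mathfrak{y}\in C$. Consequently a non-empty closed set has a generic point precisely when it has a least element for inclusion, and such an element is then the unique generic point because $\sigma_{\mathfrak S}$ is $T_0$ by Proposition \ref{t0a}.

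The step I would isolate as the key lemma is that \emph{every non-empty irreducible closed subset $C$ of $\sigma_{\mathfrak S}$ is subbasic}, namely $C=\mathfrak{b}^{\uparrow}$ with $\mathfrak{b}:=\bigcap_{\mathfrak{x}\in C}\mathfrak{x}$. The inclusion $C\subseteq\mathfrak{b}^{\uparrow}$ is immediate. For the reverse, write $C=\bigcap_{\beta}(\mathfrak{a}_{\beta 1}^{\uparrow}\cup\cdots\cup\mathfrak{a}_{\beta n_{\beta}}^{\uparrow})$; for each index $\beta$ irreducibility forces $C\subseteq\mathfrak{a}_{\beta i}^{\uparrow}$ for a single $i$, whence $\mathfrak{a}_{\beta i}\subseteq\mathfrak{b}$ and the factor $\mathfrak{a}_{\beta 1}^{\uparrow}\cup\cdots\cup\mathfrak{a}_{\beta n_{\beta}}^{\uparrow}$ contains $\mathfrak{b}^{\uparrow}$. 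Intersecting over $\beta$ yields $\mathfrak{b}^{\uparrow}\subseteq C$. This is exactly where irreducibility is indispensable, and I expect it to be the main obstacle: one must first collapse the finite unions to single subbasic pieces before the arbitrary intersection can be handled by Proposition \ref{upp}(\ref{insum}); without irreducibility a finite union $\mathfrak{a}^{\uparrow}\cup\mathfrak{b}^{\uparrow}$ need not be subbasic (it only satisfies the inclusions of Proposition \ref{upp}(\ref{ihs})).

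With the lemma in hand both implications become short. For the forward direction, assume $\sigma_{\mathfrak S}$ is sober and let $\mathfrak{a}^{\uparrow}$ be a non-empty irreducible subbasic closed set. Its generic point $\mathfrak{z}$ satisfies $\mathfrak{a}^{\uparrow}=\mathrm{Cl}(\{\mathfrak{z}\})=\mathfrak{z}^{\uparrow}$, and since $\mathfrak{z}$ is then the least element of $\mathfrak{a}^{\uparrow}$ one computes $\mathfrak{z}=\bigcap_{\mathfrak{a}\subseteq\mathfrak{x},\,\mathfrak{x}\in\sigma_{\mathfrak S}}\mathfrak{x}\in\sigma_{\mathfrak S}$; translating this realization of $\mathfrak{a}^{\uparrow}$ as the up-set of the intersection ideal, via Proposition \ref{upp}(\ref{insum}), gives (\ref{soc}). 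For the converse, assume (\ref{soc}) holds for every non-empty irreducible subbasic closed set and let $C$ be an arbitrary non-empty irreducible closed set. By the lemma $C=\mathfrak{b}^{\uparrow}$ is subbasic and irreducible, so (\ref{soc}) applies to it; its content is that the intersection ideal $\mathfrak{b}=\bigcap_{\mathfrak{x}\in C}\mathfrak{x}$ is itself the least member of $C$, hence lies in $\sigma_{\mathfrak S}$ and realizes $C=\mathfrak{b}^{\uparrow}=\mathrm{Cl}(\{\mathfrak{b}\})$.

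Thus every non-empty irreducible closed set acquires a generic point, unique by $T_0$-ness, and $\sigma_{\mathfrak S}$ is sober. The remaining routine points I would verify are that $\mathfrak{b}=\bigcap_{\mathfrak{x}\in C}\mathfrak{x}$ is genuinely an ideal of $\mathfrak S$ (intersections of ideals are ideals) and that the least-element characterization of the generic point is faithfully recorded by the inclusion in (\ref{soc}) through Proposition \ref{upp}(\ref{insum}); neither presents any difficulty once the reduction lemma is established.
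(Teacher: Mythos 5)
Your proof is correct and takes essentially the same route as the paper's: both rest on the facts that $\mathrm{Cl}(\{\mathfrak{x}\})=\mathfrak{x}^{\uparrow}$ for points of $\sigma_{\mathfrak S}$ and that irreducibility collapses each finite union of subbasic pieces to a single one, so that every non-empty irreducible closed set is the up-set of a single ideal (the paper realizes it as $\bigl(\sum_{i}\mathfrak{x}_{ji}\bigr)^{\uparrow}$ via Proposition \ref{upp}(\ref{insum}), you as $\mathfrak{b}^{\uparrow}$ with $\mathfrak{b}=\bigcap_{\mathfrak{x}\in C}\mathfrak{x}$ --- an immaterial difference). Note also that you read condition (\ref{soc}) exactly the way the paper's own proof uses it, namely as the assertion that the ideal $\bigcap\{\mathfrak{x}\in\sigma_{\mathfrak S}\mid \mathfrak{a}\subseteq\mathfrak{x}\}$ itself belongs to $\sigma_{\mathfrak S}$ (equivalently, to $\mathfrak{a}^{\uparrow}$); the set-inclusion as literally displayed in the theorem is vacuously true, so your membership reading is the correct one.
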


\begin{proof}
If $\sigma_{\mathfrak S}$ is a sober space and $\mathfrak{a}^{\uparrow}$ is a non-empty irreducible subbasic closed subset of $\sigma_{\mathfrak S}$, then  
$\mathfrak{a}^{\uparrow}=\mathrm{Cl}(\{\mathfrak{b}\})=\mathfrak{b}^{\uparrow}$
for some $\mathfrak{b}\in \sigma_{\mathfrak S}$, and we have $$\displaystyle\mathfrak{b}=\bigcap_{\substack{\mathfrak{a}\subseteq \mathfrak{x} \\ \mathfrak{x}\in \sigma_{\mathfrak S}}}  \mathfrak{x}^{\uparrow}\in \sigma_{\mathfrak S}.$$  
Conversely, suppose the condition (\ref{soc}) holds for every non-empty irreducible subset of $\sigma_{\mathfrak S}$. Let $\mathcal K$ be an irreducible closed subset of $\sigma_{\mathfrak S}$. Then $$\mathcal K=\bigcap_{i\in \Omega}\left (  \bigcup_{j=1}^m \mathfrak{x}_{ji}^{\uparrow} \right)$$ for some ideals $\mathfrak x_{ji}$ of $\mathfrak S$. Since $\mathcal K$ is irreducible, for every $i\in \Omega$ there exists an ideal $\mathfrak x_{ji}$ of $\mathfrak S$ such that $\displaystyle\mathcal K\subseteq \mathfrak{x}_{ji}^{\uparrow}\subseteq \bigcup_{j=1}^m\mathfrak{x}_{ji}^{\uparrow}$ and thus, if $\mathfrak y= \sum_{i\in \Omega}\mathfrak x_{ji}$, then we have  $$\mathcal K=\bigcap_{i\in \Omega}\mathfrak{x}_{ji}^{\uparrow}=\mathfrak{y}^{\uparrow}=\left(\bigcap_{\substack{\mathfrak{y}\subseteq \mathfrak{s} \\ \mathfrak{s}\in \sigma_{\mathfrak S}}}  \mathfrak{s} \right)^{\uparrow}.$$ By assumption, $\displaystyle\bigcap_{\substack{\mathfrak{y}\subseteq \mathfrak{s} \\ \mathfrak{s}\in \sigma_{\mathfrak S}}}  \mathfrak{s}\in \sigma_{\mathfrak S},$ and thus $\displaystyle\mathcal K=\mathrm{Cl}\left({\bigcap_{\substack{\mathfrak{y}\subseteq \mathfrak{s} \\ \mathfrak{s}\in \sigma_{\mathfrak S}}}  \mathfrak{s}}\right).$  
The uniqueness part follows from Proposition \ref{t0a}.
\end{proof}

\begin{cor}
The Is\'{e}ki spaces of proper ideals, prime ideals, and strongly irreducible ideals are sober.
\end{cor}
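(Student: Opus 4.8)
The plan is to invoke Theorem~\ref{sob}: once the characterization is in hand, proving the corollary reduces to verifying its hypothesis for each of the three spectra. Concretely, for a non-empty irreducible subbasic closed set $\mathfrak a^\uparrow$ I would set $\mathfrak b := \bigcap\{\mathfrak x \in \sigma_{\mathfrak S} : \mathfrak a \subseteq \mathfrak x\}$ and show $\mathfrak b \in \sigma_{\mathfrak S}$, so that (as in the proof of Theorem~\ref{sob}, where $\mathrm{Cl}(\{\mathfrak b\})=\mathfrak b^\uparrow$) this $\mathfrak b$ is the generic point of $\mathfrak a^\uparrow$. Since $\mathfrak a \subseteq \mathfrak x$ for every member of the defining family forces $\mathfrak a \subseteq \mathfrak b$, while conversely every $\mathfrak x \supseteq \mathfrak a$ in $\sigma_{\mathfrak S}$ contains $\mathfrak b$, one checks immediately that $\mathfrak b^\uparrow = \mathfrak a^\uparrow$ and that $\{\mathfrak x \in \sigma_{\mathfrak S} : \mathfrak a \subseteq \mathfrak x\}$ coincides with $\{\mathfrak x \in \sigma_{\mathfrak S} : \mathfrak b \subseteq \mathfrak x\}$; in particular $\bigcap\{\mathfrak x \in \sigma_{\mathfrak S} : \mathfrak b \subseteq \mathfrak x\} = \mathfrak b$. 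Thus membership $\mathfrak b \in \sigma_{\mathfrak S}$ is exactly what must be established in each of the three cases.

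First I would dispose of the proper-ideal spectrum. Here $\mathfrak a^\uparrow \neq \emptyset$ already forces $\mathfrak a$ to be proper, since any $\mathfrak x \in \mathfrak a^\uparrow$ is a proper ideal containing $\mathfrak a$; hence $\mathfrak a$ itself belongs to the defining family of $\mathfrak b$, and together with $\mathfrak a \subseteq \mathfrak b$ this gives $\mathfrak b = \mathfrak a$, a proper ideal, so $\mathfrak b \in \sigma_{\mathfrak S}$. No appeal to irreducibility is needed in this case.

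The heart of the argument is the strongly irreducible spectrum, which I would treat topologically. Suppose $\mathfrak u \cap \mathfrak v \subseteq \mathfrak b$ for ideals $\mathfrak u,\mathfrak v$ of $\mathfrak S$. For every $\mathfrak s \in \mathfrak b^\uparrow$ we then have $\mathfrak u \cap \mathfrak v \subseteq \mathfrak b \subseteq \mathfrak s$, and since $\mathfrak s$ is strongly irreducible this yields $\mathfrak u \subseteq \mathfrak s$ or $\mathfrak v \subseteq \mathfrak s$; hence $\mathfrak b^\uparrow \subseteq \mathfrak u^\uparrow \cup \mathfrak v^\uparrow$. Because $\mathfrak b^\uparrow = \mathfrak a^\uparrow$ is irreducible and $\mathfrak u^\uparrow,\mathfrak v^\uparrow$ are (subbasic) closed, irreducibility forces $\mathfrak b^\uparrow \subseteq \mathfrak u^\uparrow$ or $\mathfrak b^\uparrow \subseteq \mathfrak v^\uparrow$. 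In the first case every $\mathfrak s \in \sigma_{\mathfrak S}$ with $\mathfrak b \subseteq \mathfrak s$ satisfies $\mathfrak u \subseteq \mathfrak s$, so $\mathfrak u \subseteq \bigcap\{\mathfrak s \in \sigma_{\mathfrak S} : \mathfrak b \subseteq \mathfrak s\} = \mathfrak b$, and symmetrically $\mathfrak v \subseteq \mathfrak b$ in the second. Thus $\mathfrak b$ is strongly irreducible, i.e.\ $\mathfrak b \in \sigma_{\mathfrak S}$.

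Finally, for the prime spectrum I would reduce to the previous case. By the radical representation $\sqrt{\mathfrak a} = \bigcap\{\mathfrak p : \mathfrak a \subseteq \mathfrak p,\ \mathfrak p \text{ prime}\}$ (the proposition of \cite{N18} recalled above), here $\mathfrak b = \sqrt{\mathfrak a}$ is a radical ideal. Every prime ideal is strongly irreducible, so the same topological argument run inside the prime spectrum shows $\mathfrak b$ is strongly irreducible. It then remains to observe that a radical strongly irreducible ideal is prime: if $xy \in \mathfrak b$, then $(\langle x\rangle \cap \langle y\rangle)^2 \subseteq \langle x\rangle\langle y\rangle = \langle xy\rangle \subseteq \mathfrak b$, so radicality gives $\langle x\rangle \cap \langle y\rangle \subseteq \mathfrak b$, and strong irreducibility yields $x \in \mathfrak b$ or $y \in \mathfrak b$; hence $\mathfrak b$ is prime and lies in $\sigma_{\mathfrak S}$. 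I expect the strongly irreducible case to be the only genuine obstacle, namely passing correctly between the topological irreducibility of $\mathfrak b^\uparrow$ and the ideal-theoretic statement, together with the radical/strong-irreducibility bridge needed for the prime case; the verifications for the remaining class are routine.
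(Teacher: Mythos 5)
Your proposal is correct and follows exactly the route the paper intends: the corollary is stated as an immediate consequence of Theorem \ref{sob}, whose proof shows that the condition really being used is that $\bigcap\{\mathfrak{x}\in\sigma_{\mathfrak{S}} \mid \mathfrak{a}\subseteq\mathfrak{x}\}$ itself lies in $\sigma_{\mathfrak{S}}$ for each non-empty irreducible subbasic closed set $\mathfrak{a}^{\uparrow}$ (you rightly read through the vacuous literal form of condition (\ref{soc})), and the paper leaves these verifications entirely implicit. Your three verifications---proper ideals trivially since $\mathfrak{b}=\mathfrak{a}$, strongly irreducible ideals via the irreducibility of $\mathfrak{b}^{\uparrow}=\mathfrak{a}^{\uparrow}$ against the cover $\mathfrak{u}^{\uparrow}\cup\mathfrak{v}^{\uparrow}$, and prime ideals via $\mathfrak{b}=\sqrt{\mathfrak{a}}$ together with the fact that a radical strongly irreducible ideal is prime---are all sound and supply precisely what the corollary needs.
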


Like Alexander subbase theorem, there is no characterization of connectedness in terms of subbasic closed sets. Nevertheless, we wish to present a disconnectivity result of Is\'{e}ki spaces of a semiring that bears resemblance to the fact that if spectrum of prime ideals (of a commutative ring with identity) endowed with Zariski topology is disconnected, then the ring has a proper idempotent element (see \textsc{Bourbaki} \cite{B72}, Section 4.3, Corollary 2). 

We say a closed subbase $\mathcal{S}$ of a topological space $X$ \emph{strongly disconnects} $X$  if there exist two non-empty subsets $A,$ $B$ of $\mathcal{S}$ such that $X=A\cup B$ and $A\cap B=\emptyset$. 
It is clear that if some closed subbase strongly disconnects  a  topological space, then the space is disconnected. Also, if a space is disconnected, then some closed subbase (for instance the collection of all its closed subspaces) strongly disconnects it. 
 
\begin{prop}\label{pr1}  
 Suppose $\mathfrak S$ is a semiring with multiplicative identity $1$ and $\mathfrak S$ has zero Jacobson radical. Let $\sigma_{\mathfrak S}$ be a spectrum of $\mathfrak S$ containing all maximal ideals of $\mathfrak S.$ If the subbase $\mathcal{S}$ of the Is\'{e}ki space $\sigma_{\mathfrak S}$ strongly  disconnects  $\sigma_{\mathfrak S}$, then $\mathfrak S$ has a non-trivial idempotent element.
\end{prop}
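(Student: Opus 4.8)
The plan is to convert the topological disconnection into two complementary ideal-theoretic identities and then read off the idempotent by elementary algebra. Since $\mathcal S$ strongly disconnects $\sigma_{\mathfrak S}$, I would record the two witnessing subbasic closed sets as $A=\mathfrak a^{\uparrow}$ and $B=\mathfrak b^{\uparrow}$ for suitable ideals $\mathfrak a,\mathfrak b$ of $\mathfrak S$, so that the hypotheses become $\mathfrak a^{\uparrow}\cup\mathfrak b^{\uparrow}=\sigma_{\mathfrak S}$ and $\mathfrak a^{\uparrow}\cap\mathfrak b^{\uparrow}=\emptyset$, with both $A,B$ non-empty.

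First I would exploit the disjointness. By Proposition~\ref{upp}(\ref{insum}), $\mathfrak a^{\uparrow}\cap\mathfrak b^{\uparrow}=(\mathfrak a+\mathfrak b)^{\uparrow}$, so the hypothesis forces $(\mathfrak a+\mathfrak b)^{\uparrow}=\emptyset$; that is, no member of $\sigma_{\mathfrak S}$ contains $\mathfrak a+\mathfrak b$. If $\mathfrak a+\mathfrak b$ were proper, Proposition~\ref{maxe} would place it inside some maximal ideal $\mathfrak m$, and as $\sigma_{\mathfrak S}$ contains all maximal ideals this would give $\mathfrak m\in(\mathfrak a+\mathfrak b)^{\uparrow}$, a contradiction. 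Hence $\mathfrak a+\mathfrak b=\mathfrak S$, so there are $a\in\mathfrak a$ and $b\in\mathfrak b$ with $a+b=1$.

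Next I would exploit the covering. By Proposition~\ref{upp}(\ref{ihs}), $\sigma_{\mathfrak S}=\mathfrak a^{\uparrow}\cup\mathfrak b^{\uparrow}\subseteq(\mathfrak a\cap\mathfrak b)^{\uparrow}\subseteq(\mathfrak{ab})^{\uparrow}\subseteq\sigma_{\mathfrak S}$, whence $(\mathfrak{ab})^{\uparrow}=\sigma_{\mathfrak S}$. Thus every maximal ideal contains $\mathfrak{ab}$, and since the Jacobson radical is the intersection of all maximal ideals and is assumed to be zero, we get $\mathfrak{ab}=\mathfrak o$; in particular $ab=0$. Multiplying $a+b=1$ by $a$ now yields $a=a(a+b)=a^2+ab=a^2$, so $a$ is idempotent (and by symmetry so is $b$).

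It remains to rule out the trivial idempotents. If $a=1$ then $1\in\mathfrak a$, forcing $\mathfrak a=\mathfrak S$ and hence $A=\mathfrak S^{\uparrow}=\emptyset$, contrary to $A\neq\emptyset$; if $a=0$ then $b=1\in\mathfrak b$, forcing $\mathfrak b=\mathfrak S$ and $B=\emptyset$, again impossible. Thus $a\neq 0,1$ is the required non-trivial idempotent. The delicate point is the coordinated use of the two hypotheses: disjointness together with the containment of all maximal ideals yields the partition of unity $\mathfrak a+\mathfrak b=\mathfrak S$, while the covering together with the vanishing of the Jacobson radical yields the orthogonality $\mathfrak{ab}=\mathfrak o$; it is the interplay of these two conclusions that manufactures the idempotent, so neither algebraic hypothesis can be dropped.
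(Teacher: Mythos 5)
Your proof is correct and follows essentially the same route as the paper's: identify the disconnection with two subbasic sets $\mathfrak a^{\uparrow},\mathfrak b^{\uparrow}$, use disjointness plus the maximal-ideal hypothesis to get $\mathfrak a+\mathfrak b=\mathfrak S$, use the covering plus the vanishing Jacobson radical to get $\mathfrak{ab}=\mathfrak o$, and then extract the idempotent from $a+b=1$, $ab=0$. Your explicit appeals to Propositions \ref{upp} and \ref{maxe} and your case analysis ruling out $a=0,1$ are just slightly more detailed versions of the paper's own steps.
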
 

\begin{proof}
Let $\mathfrak{x}$ and $\mathfrak{y}$ be ideals of $\mathfrak S$ such that
\begin{enumerate}
\item $
\mathfrak{x}^{\uparrow}\cap\mathfrak{y}^{\uparrow} =\emptyset,$
\item $\mathfrak{x}^{\uparrow}\cup\mathfrak{y}^{\uparrow} =\sigma_{\mathfrak S},$ and  
\item \label{nnep} $\mathfrak{x}^{\uparrow}\ne\emptyset,  \mathfrak{y}^{\uparrow} \neq \emptyset. $
\end{enumerate}
Since $\mathfrak{x}^{\uparrow}\cap\mathfrak{b}^{\uparrow}=(\mathfrak{x} +\mathfrak{y})^{\uparrow}$, we therefore have $(\mathfrak{x}+\mathfrak{y})^{\uparrow}=\emptyset$ and hence $\mathfrak{x}+\mathfrak{y} =\mathfrak S$ because $\sigma_{\mathfrak S}$ contains all maximal ideals of $\mathfrak S$. On the other hand, 
\[
(\mathfrak{x}\mathfrak{y})^{\uparrow}\supseteq \mathfrak{x}^{\uparrow}\cup\mathfrak{y}^{\uparrow}=\sigma_{\mathfrak S},
\]
which then implies that $\mathfrak{x}\mathfrak{y}$ is contained in every maximal ideal of $\mathfrak S$, and is therefore the zero ideal since $\mathfrak{S}$ has zero Jacobson radical. Note that the condition (\ref{nnep}) implies that neither $\mathfrak{x}$ nor $\mathfrak{y}$ is the entire semiring $\mathfrak S$. So the 
equality $\mathfrak{x}+\mathfrak{y}=\mathfrak S$ furnishes non-zero elements $x\in\mathfrak{x}$ and $y\in\mathfrak{y}$ such that $x+y=1$. Since  $xy=0$ as $xy\in\mathfrak{x}\mathfrak{y}=\mathfrak{o}$, we therefore have
$
x= x(x+y)=x^{ 2}+xy =x^{ 2},
$ 
showing that $x$ is a non-zero  idempotent element in $\mathfrak S$. Since $\mathfrak{x}\ne \mathfrak S$,  $x\ne 1$, and hence $x$ is a non-trivial idempotent element of $\mathfrak S$.
\end{proof}

Theorem \ref{conn} below gives a sufficient condition for an Is\'{e}ki space to be connected and to prove that result we first need the following 

\begin{lem}\label{irrc}
In every Is\'{e}ki space $\sigma_{\mathfrak S}$, the subbasic closed sets of the form: $\{\mathfrak{y}^{\uparrow}\mid \mathfrak{y}\in \mathfrak{y}^{\uparrow}\}$ are irreducible. 
\end{lem}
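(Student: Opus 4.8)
The plan is to identify each such subbasic closed set with the closure of a singleton and then invoke the elementary fact that the closure of a point is always irreducible. First I would unwind the indexing in the statement: the condition $\mathfrak{y}\in\mathfrak{y}^{\uparrow}$ simply says that $\mathfrak{y}$ is itself a point of the spectrum $\sigma_{\mathfrak S}$, since the inclusion $\mathfrak{y}\subseteq\mathfrak{y}$ holds automatically. In particular every such $\mathfrak{y}^{\uparrow}$ is non-empty (it contains $\mathfrak{y}$), which is part of what is required of an irreducible set.

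The heart of the argument is the claim that $\mathfrak{y}^{\uparrow}=\mathrm{Cl}(\{\mathfrak{y}\})$ whenever $\mathfrak{y}\in\sigma_{\mathfrak S}$. One inclusion is immediate: $\mathfrak{y}^{\uparrow}$ is a subbasic closed set containing the point $\mathfrak{y}$, so $\mathrm{Cl}(\{\mathfrak{y}\})\subseteq\mathfrak{y}^{\uparrow}$. For the reverse inclusion I would show that every closed set containing $\mathfrak{y}$ already contains all of $\mathfrak{y}^{\uparrow}$. Writing an arbitrary closed set as $\mathcal{C}=\bigcap_{i}\bigl(\bigcup_{j}\mathfrak{a}_{ij}^{\uparrow}\bigr)$ and using $\mathfrak{y}\in\mathcal{C}$, for each index $i$ the point $\mathfrak{y}$ lies in some $\mathfrak{a}_{ij_0}^{\uparrow}$, that is $\mathfrak{a}_{ij_0}\subseteq\mathfrak{y}$; by the order-reversing property of the map $^{\uparrow}$ (Proposition \ref{upp}\,(1)) this gives $\mathfrak{y}^{\uparrow}\subseteq\mathfrak{a}_{ij_0}^{\uparrow}\subseteq\bigcup_{j}\mathfrak{a}_{ij}^{\uparrow}$ for every $i$, and intersecting over $i$ yields $\mathfrak{y}^{\uparrow}\subseteq\mathcal{C}$. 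Hence $\mathfrak{y}^{\uparrow}$ is the smallest closed set containing $\mathfrak{y}$, so $\mathfrak{y}^{\uparrow}=\mathrm{Cl}(\{\mathfrak{y}\})$. Alternatively, one can compute $\mathfrak{y}^{\uparrow}=\bigl(\sum_{\mathfrak{a}\subseteq\mathfrak{y}}\mathfrak{a}\bigr)^{\uparrow}=\bigcap_{\mathfrak{a}\subseteq\mathfrak{y}}\mathfrak{a}^{\uparrow}$ via Proposition \ref{upp}\,(\ref{insum}), exhibiting $\mathfrak{y}^{\uparrow}$ directly as the intersection of all subbasic closed sets passing through $\mathfrak{y}$.

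Finally I would conclude using the standard topological fact: a singleton $\{\mathfrak{y}\}$ is trivially irreducible, and the closure of an irreducible set is irreducible (if $\mathrm{Cl}(A)\subseteq\mathcal{C}_1\cup\mathcal{C}_2$ with the $\mathcal{C}_i$ closed, then $A\subseteq\mathcal{C}_1\cup\mathcal{C}_2$, so $A$ lies wholly in one of them by irreducibility, and taking closures places $\mathrm{Cl}(A)$ there as well). Applying this with $A=\{\mathfrak{y}\}$ together with the identification $\mathfrak{y}^{\uparrow}=\mathrm{Cl}(\{\mathfrak{y}\})$ of the previous paragraph shows that $\mathfrak{y}^{\uparrow}$ is irreducible. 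The only genuine obstacle is the reverse inclusion in the closure computation, where the finite unions occurring in a general closed set must be dealt with; the key point is that membership of a single element in a finite union of subbasic closed sets forces membership in one of them, so that the order-reversal of $^{\uparrow}$ can be applied term by term.
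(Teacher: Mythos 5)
Your proof is correct and follows essentially the same route as the paper: both identify $\mathfrak{y}^{\uparrow}$ with $\mathrm{Cl}(\{\mathfrak{y}\})$ by writing an arbitrary closed set as an intersection of finite unions of subbasic sets and using the order-reversing property of $^{\uparrow}$ to trap $\mathfrak{y}^{\uparrow}$ inside each finite union. If anything, your version is slightly tighter, since it works with an arbitrary closed set containing $\mathfrak{y}$ (avoiding the paper's case split on $\mathrm{Cl}(\mathfrak{y})=\sigma_{\mathfrak S}$) and makes explicit the standard fact, left implicit in the paper, that point closures are irreducible.
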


\begin{proof}
It is sufficient to show that $\mathfrak{y}^{\uparrow}=\mathrm{Cl}(\mathfrak{y})$ whenever $\mathfrak{y}\in\mathfrak{y}^{\uparrow}$. Since $\mathrm{Cl}(\mathfrak{y})$ is the smallest closed set containing $\mathfrak{x}$, and since $\mathfrak{y}^{\uparrow}$ is a closed set containing $\mathfrak{y}$, obviously then  $\mathrm{Cl}(\mathfrak{y})\subseteq \mathfrak{y}^{\uparrow}$. 
For the reverse inclusion, if $\mathrm{Cl}(\mathfrak{y})= \sigma_{\mathfrak S}$, then 
\[ 
\sigma_{\mathfrak S}=\mathrm{Cl}(\mathfrak{y})\subseteq \mathfrak{y}^{\uparrow}\subseteq \sigma_{\mathfrak S}.
\] 
This proves that $\mathfrak{y}^{\uparrow}=\mathrm{Cl}(\mathfrak{y})$. Suppose that $\mathrm{Cl}(\mathfrak{y})\neq \sigma_{\mathfrak S}$. Since $\mathrm{Cl}(\mathfrak{y})$ is a closed set,  there exists an  index set, $\Omega$, such that,  for each $\lambda\in\Omega$, there is a positive integer $x_{\lambda}$ and ideals $\mathfrak{x}_{\lambda 1},\dots, \mathfrak{x}_{\lambda n_\lambda}$ of $\mathfrak S$ such that 
$$ 
\mathrm{Cl}(\mathfrak{y})={\bigcap_{\lambda\in\Omega}}\left({\bigcup_{ i\,=1}^{ x_\lambda}}\mathfrak{x}_{\lambda i}^{\uparrow}\right).
$$
Since  
$\mathrm{Cl}(\mathfrak{y})\neq \sigma_{\mathfrak S},$ we can assume that ${\bigcup_{ i\,=1}^{ x_\lambda}}\mathfrak{x}_{\lambda i}^{\uparrow}$ is non-empty for each $\lambda$. Therefore, $\mathfrak{y}\in   {\bigcup_{ i\,=1}^{ x_\lambda}}\mathfrak{x}_{\lambda i}^{\uparrow}$ for each $\lambda$, and hence $\mathfrak{y}^{\uparrow}\subseteq {\bigcup_{ i=1}^{ x_\lambda}}\mathfrak{x}_{\lambda i}^{\uparrow}$, that is $\mathfrak{y}^{\uparrow}\subseteq \mathrm{Cl}(\mathfrak{y})$ as desired. 
\end{proof}

\begin{thm}\label{conn}
If a spectrum $\sigma_{\mathfrak S}$ of a semiring $\mathfrak S$ contains the zero ideal, then the Is\'{e}ki space $\sigma_{\mathfrak S}$ is connected. 
\end{thm}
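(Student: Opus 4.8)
The plan is to prove the stronger assertion that $\sigma_{\mathfrak S}$ is \emph{irreducible}, and then to invoke the standard fact that every irreducible topological space is connected. The entire argument rests on Lemma \ref{irrc} together with the first part of Proposition \ref{upp}, so most of the work has already been front-loaded.

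First I would observe that the zero ideal $\mathfrak{o}$ belongs to its own upper set: since $\mathfrak{o}\subseteq\mathfrak{o}$ trivially and $\mathfrak{o}\in\sigma_{\mathfrak S}$ by hypothesis, the defining condition (\ref{clop}) gives $\mathfrak{o}\in\mathfrak{o}^{\uparrow}$. Hence $\mathfrak{o}^{\uparrow}$ is a subbasic closed set of precisely the form treated in Lemma \ref{irrc} (take $\mathfrak{y}=\mathfrak{o}$), so that lemma yields both $\mathfrak{o}^{\uparrow}=\mathrm{Cl}(\{\mathfrak{o}\})$ and the irreducibility of $\mathfrak{o}^{\uparrow}$. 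On the other hand, every ideal of $\mathfrak S$ contains $\mathfrak{o}$, so every point of $\sigma_{\mathfrak S}$ lies in $\mathfrak{o}^{\uparrow}$; this is exactly the identity $\mathfrak{o}^{\uparrow}=\sigma_{\mathfrak S}$ recorded in Proposition \ref{upp}(1). Combining the two observations, the whole space $\sigma_{\mathfrak S}=\mathfrak{o}^{\uparrow}=\mathrm{Cl}(\{\mathfrak{o}\})$ is the closure of a single point and, by Lemma \ref{irrc}, irreducible.

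To finish, I would recall why irreducibility forces connectedness: if $\sigma_{\mathfrak S}$ were disconnected, we could write $\sigma_{\mathfrak S}=A\cup B$ with $A,B$ non-empty, disjoint, and closed; each of $A$ and $B$ would then be a \emph{proper} closed subset (its complement being the other, which is non-empty), contradicting irreducibility. Hence $\sigma_{\mathfrak S}$ is connected.

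I do not anticipate a genuine obstacle, since all the topological content is isolated in Lemma \ref{irrc}. The only points requiring attention are the verification that $\mathfrak{o}\in\mathfrak{o}^{\uparrow}$ — this is where the hypothesis $\mathfrak{o}\in\sigma_{\mathfrak S}$ is essential, for without it $\mathfrak{o}^{\uparrow}$ need not be of the form required by Lemma \ref{irrc} — and reading $\mathfrak{o}^{\uparrow}$ correctly as the whole space rather than as a proper subset.
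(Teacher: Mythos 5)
Your proof is correct and follows essentially the same route as the paper's: both identify $\sigma_{\mathfrak S}=\mathfrak{o}^{\uparrow}$, apply Lemma \ref{irrc} (with $\mathfrak{y}=\mathfrak{o}$, which lies in $\mathfrak{o}^{\uparrow}$ precisely because of the hypothesis) to get irreducibility, and conclude connectedness from the standard fact that irreducible spaces are connected. Your write-up merely makes explicit the details the paper leaves implicit, including the verification that the hypothesis $\mathfrak{o}\in\sigma_{\mathfrak S}$ is what licenses the use of the lemma.
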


\begin{proof}
Since  $\sigma_{\mathfrak S}=\mathfrak{o}^{\uparrow}$ and irreducibility implies connectedness, the claim now follows from Lemma \ref{irrc}.
\end{proof}

\begin{cor}
Is\'{e}ki spaces of proper, finitely generated, principal ideals of a semiring are connected. 
\end{cor}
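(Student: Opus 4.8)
The plan is to deduce all three cases directly from Theorem~\ref{conn}, whose only hypothesis is that the spectrum contains the zero ideal $\mathfrak{o}$. Thus the entire task reduces to verifying, for each of the three classes of ideals, that $\mathfrak{o}$ is a member of it; no separate topological argument is needed.

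First I would check that the zero ideal $\mathfrak{o}=\{0\}$ is a genuine ideal in the sense fixed in the Preliminaries: it is nonempty, since it contains $0$, and (under the tacit standing assumption that $\mathfrak S$ is nontrivial, so that $1\neq 0$) it is a proper subset of $\mathfrak S$. Consequently $\mathfrak{o}$ lies in the spectrum of all proper ideals, which gives connectedness in that case by Theorem~\ref{conn}.

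Next I would observe that $\mathfrak{o}=\langle 0\rangle$ is generated by the single element $0$, so it is principal and hence, a fortiori, finitely generated. Therefore $\mathfrak{o}$ belongs both to the spectrum of principal ideals and to the spectrum of finitely generated ideals. Applying Theorem~\ref{conn} once more in each of these two situations yields the remaining two connectedness assertions.

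I do not expect any genuine obstacle here, since the corollary is an immediate specialization of Theorem~\ref{conn}. The only point requiring a moment's care is the background hypothesis $1\neq 0$: a trivial semiring possesses no proper ideals whatsoever, so this assumption is exactly what guarantees that $\mathfrak{o}$ is a \emph{proper} ideal and hence a legitimate point of each of the three spectra.
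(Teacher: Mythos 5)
Your proof is correct and is exactly the argument the paper intends: the corollary is stated without proof immediately after Theorem~\ref{conn}, the implicit justification being that each of the three spectra (proper, finitely generated, principal ideals) contains the zero ideal $\mathfrak{o}=\langle 0\rangle$, so the theorem applies directly. Your additional remark about the nontriviality assumption $1\neq 0$ (needed so that $\mathfrak{o}$ is a proper ideal at all) is a sensible point of care that is consistent with the paper's standing convention $\mathfrak{S}\notin\sigma_{\mathfrak S}$.
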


\textsc{Is\'{e}ki \& Miyanaga} \cite[Theorem 2]{IM56v} has shown that the spectrum of maximal ideals with the Stone topology (= coarse lower topology) is a $T_1$-space. Furthermore, \textsc{Is\'{e}ki} \cite[Theorem 3]{I56v} proved that every prime ideal is maximal, if and only if the
topological space
is a $T_1$-space. Surprisingly, Is\'{e}ki space of maximal ideals of a semiring also characterizes $T_1$-space as we shall see in the following

\begin{thm}
An Is\'{e}ki space $\sigma_{\mathfrak{S}}$ is $T_1$ if and only if the spectrum $\sigma_{\mathfrak{S}}$ is the set of all maximal ideals of $\mathfrak{S}.$ 
\end{thm}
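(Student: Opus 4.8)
The plan is to translate the $T_1$ axiom into a statement about the inclusion order on $\sigma_{\mathfrak{S}}$ and then match that order-theoretic condition against the class of maximal ideals. First I would observe that every point $\mathfrak{y}\in\sigma_{\mathfrak{S}}$ satisfies $\mathfrak{y}\in\mathfrak{y}^{\uparrow}$, so the computation carried out in the proof of Lemma \ref{irrc} applies to each point and yields $\mathrm{Cl}(\{\mathfrak{y}\})=\mathfrak{y}^{\uparrow}$. Since a space is $T_1$ exactly when every singleton is closed, the Is\'{e}ki space $\sigma_{\mathfrak{S}}$ is $T_1$ if and only if $\mathfrak{y}^{\uparrow}=\{\mathfrak{y}\}$ for every $\mathfrak{y}\in\sigma_{\mathfrak{S}}$. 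By the definition (\ref{clop}) of $\mathfrak{y}^{\uparrow}$, this is the same as saying that no member of $\sigma_{\mathfrak{S}}$ is properly contained in another, i.e.\ that $\sigma_{\mathfrak{S}}$ is an antichain under inclusion.

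Granting this reformulation, the sufficiency direction is short: if $\sigma_{\mathfrak{S}}$ is the set of all maximal ideals, then two distinct maximal ideals are never comparable, so $\mathfrak{m}^{\uparrow}=\{\mathfrak{m}\}$ for each maximal $\mathfrak{m}$ and the space is $T_1$. For necessity I would fix $\mathfrak{x}\in\sigma_{\mathfrak{S}}$ and invoke Proposition \ref{maxe} to obtain a maximal ideal $\mathfrak{m}\supseteq\mathfrak{x}$. Because a maximal ideal is prime, and a prime ideal is in turn primary, radical, irreducible, and strongly irreducible, every distinguished class forming a spectrum contains all maximal ideals; hence $\mathfrak{m}\in\sigma_{\mathfrak{S}}$. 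The antichain condition $\mathfrak{m}\in\mathfrak{x}^{\uparrow}=\{\mathfrak{x}\}$ then forces $\mathfrak{x}=\mathfrak{m}$, so $\mathfrak{x}$ is maximal; this gives $\sigma_{\mathfrak{S}}\subseteq\mathrm{Max}(\mathfrak{S})$, and the containment of all maximal ideals just used supplies the reverse inclusion, whence equality.

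The step I expect to be the crux is exactly the claim that the maximal ideal $\mathfrak{m}$ furnished by Proposition \ref{maxe} again lies in the spectrum, i.e.\ that $\mathrm{Max}(\mathfrak{S})\subseteq\sigma_{\mathfrak{S}}$. For the classes built out of primeness this is automatic as noted, but for spectra such as the finitely generated or principal ideals a maximal ideal need not belong to $\sigma_{\mathfrak{S}}$; there the equivalence should be read with the standing assumption $\mathrm{Max}(\mathfrak{S})\subseteq\sigma_{\mathfrak{S}}$ already present in Theorem \ref{csb}. Everything else---the closure identity and the antichain bookkeeping---is routine once the reformulation of the first paragraph is in place.
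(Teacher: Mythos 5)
Your proof is correct and follows essentially the same route as the paper's: the closure identity $\mathrm{Cl}(\{\mathfrak{y}\})=\mathfrak{y}^{\uparrow}$ from Lemma \ref{irrc}, Proposition \ref{maxe} to produce a maximal ideal above a given point of the spectrum, and the $T_1$ condition (your antichain reformulation) to force that point to be maximal. You are in fact more careful than the paper at the one delicate step: the paper silently assumes $\mathfrak{m}\in\sigma_{\mathfrak{S}}$ when it writes $\mathfrak{m}\in\mathfrak{a}^{\uparrow}$, and it never addresses the reverse inclusion $\mathrm{Max}(\mathfrak{S})\subseteq\sigma_{\mathfrak{S}}$ needed to conclude equality rather than mere containment, whereas you justify this for the prime-derived spectra and correctly flag it as an additional hypothesis for spectra (such as finitely generated or principal ideals) where it can fail.
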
   

\begin{proof} 
Suppose $\sigma_{\mathfrak{S}}$ is a $T_1$-space and let $\mathfrak{a}\in \sigma_{\mathfrak{S}}$. Then $\mathfrak{a}\in\mathfrak{a}^{\uparrow}$, and so, by Theorem \ref{irrc}, $\mathrm{Cl}({\{\mathfrak{a}\}})=\mathfrak{a}^{\uparrow}$. Let $\mathfrak{m}$ be a maximal ideal of $\mathfrak{S}$ with $\mathfrak{a}\subseteq\mathfrak{m}$. Then   $$\mathfrak{m}\in 	\mathfrak{a}^{\uparrow}=\mathrm{Cl}({\{\mathfrak{a}\}}) = \{\mathfrak{a}\},$$ where the last equality follows from the hypothesis. Therefore $\mathfrak{m}=\mathfrak{a}$, showing that $\sigma_{\mathfrak{S}}$ is contained in the set of all maximal ideals of $\mathfrak{S}$.  
Conversely, $\mathfrak{m}^{\uparrow}=\{\mathfrak{m}\}$ for every maximal ideal $\mathfrak{m}$ of $\mathfrak{S},$ so that $\mathfrak{m}\in \mathfrak{m}^{\uparrow}$, and hence, by Theorem \ref{irrc}, $\mathrm{Cl}({\{\mathfrak{m}\}})=\{\mathfrak{m}\}$, showing that the Is\'{e}ki space $\sigma_{\mathfrak{S}}$ is a $T_{ 1}$-space.
\end{proof} 

\begin{cor}
Let $\mathfrak{S}$ be a Noetherian ring. If $\sigma_{\mathfrak{S}}$ is a discrete space then $\mathfrak{S}$ is Artinian.  
\end{cor}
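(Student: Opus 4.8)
The plan is to reduce the statement to the classical structure theorem for Artinian rings, using the $T_1$-characterization established in the theorem immediately preceding it. First I would record the purely topological triviality that a discrete space is automatically $T_1$, since in a discrete space every singleton is closed. Hence, if the Is\'eki space $\sigma_{\mathfrak S}$ is discrete, it is in particular a $T_1$-space, and the hypotheses of the preceding theorem are met.

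Next I would apply that theorem, reading $\sigma_{\mathfrak S}$ as the spectrum of \emph{all} prime ideals of $\mathfrak S$ (the structure space). Since $\sigma_{\mathfrak S}$ is $T_1$, the theorem identifies $\sigma_{\mathfrak S}$ with the set of all maximal ideals of $\mathfrak S$; in other words, the set of prime ideals coincides with the set of maximal ideals. Equivalently, every prime ideal of $\mathfrak S$ is maximal, so $\mathfrak S$ has Krull dimension zero. The final step is then to invoke the classical characterization that a commutative ring is Artinian if and only if it is Noetherian and every prime ideal is maximal (Noetherian of dimension zero). As $\mathfrak S$ is assumed Noetherian and we have just shown that every prime is maximal, it follows that $\mathfrak S$ is Artinian.

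The step requiring genuine care — indeed the crux of the matter — is the choice of spectrum in the middle step: the conclusion really does need $\sigma_{\mathfrak S}$ to be the prime spectrum. Discreteness of the maximal spectrum alone would not suffice, since a semilocal Dedekind domain is Noetherian with a finite, hence discrete, maximal spectrum, yet is not Artinian. The whole force of discreteness is that it isolates \emph{all} primes, not merely the maximal ones, and this is exactly what collapses the dimension to zero. I would therefore state the corollary (and argue it) for the prime spectrum, where the $T_1$-characterization forces every prime to be maximal rather than merely forcing finiteness of the maximal ideals.

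As a complementary route reinforcing the conclusion, one may combine discreteness with quasi-compactness: by the earlier results a Noetherian $\mathfrak S$ has quasi-compact Is\'eki spaces, and a quasi-compact discrete space is finite. Thus $\sigma_{\mathfrak S}$ consists of finitely many prime ideals, all maximal by the $T_1$-property, so that $\mathfrak S$ is a Noetherian ring with finitely many primes, each maximal; this again places $\mathfrak S$ within the scope of the classical Artinian structure theorem and yields the result. Either way, the only nontrivial input beyond elementary topology is the Noetherian-plus-dimension-zero characterization of Artinian rings, which I would simply cite.
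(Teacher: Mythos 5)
Your proof is correct and is essentially the argument the paper intends: the corollary appears without proof immediately after the $T_1$-characterization, and the evident route is exactly yours --- discrete $\Rightarrow$ $T_1$ $\Rightarrow$ (by that theorem) every member of the spectrum is maximal, which for the prime spectrum means Krull dimension zero, and Noetherian of dimension zero is Artinian. The most valuable part of your write-up is the middle paragraph: as literally stated, with $\sigma_{\mathfrak S}$ an unspecified spectrum, the corollary is false --- a discrete valuation ring has a one-point, hence discrete, maximal spectrum and is Noetherian but not Artinian --- so the statement must indeed be read with $\sigma_{\mathfrak S}$ the prime spectrum (or at least a spectrum containing all prime ideals), precisely the clarification you supply. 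Your complementary route (quasi-compact plus discrete $\Rightarrow$ finite) is sound but adds nothing beyond the first argument, since its decisive step is still the $T_1$-property forcing every prime to be maximal; finiteness of the spectrum alone would not suffice, by the same semilocal Dedekind example.
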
 

\begin{rem}
\emph{Note that in an $(m,n)$-semiring, maximal ideals endowed with two special topologies generated respectively by $\Delta_x$ and $\Omega_x$ (see \cite[Theorem 4.2 and Theorem 4.3]{HKKN18})  satisfy $T_2$ and $T_1$ separation axioms}.
\end{rem}
 
We now discuss about continuous maps between Is\'{e}ki spaces of semirings. Observe that although inverse image of an ideal under a semiring homomorphism is an ideal, but the same may not hold for an arbitrary spectra $\sigma_{\mathfrak S}$. To resolve this problem, we need to impose that property on a spectrum. 
We say a spectrum $\sigma_{\mathfrak S}$ satisfies the \emph{contraction} property if for any semiring homomorphism $\phi\colon \mathfrak S\to \mathfrak S',$ the inverse image  $\phi\inv(\mathfrak{x}')$ is in $\sigma_{\mathfrak S}$, whenever $\mathfrak{x}'$ is in $\sigma_{\mathfrak S'}.$   
Since the sets $\{\mathfrak{x}^{\uparrow}\mid\mathfrak{x}\;\text{is an ideal of}\; \mathfrak{S}\}$ only form a (closed) subbasis, all our arguments need to be at this level rather than just closed sets.

\begin{prop}\label{conmap}
Let $\sigma_{\mathfrak S}$ be a spectrum satisfying the contraction property. Let $\phi\colon \mathfrak S\to \mathfrak S'$ be a semiring homomorphism  and $\mathfrak{x}'\in\sigma_{\mathfrak S'}.$ 
\begin{enumerate}

\item\label{contxr} The induced map $\phi_*\colon  \sigma_{\mathfrak S'}\to \sigma_{\mathfrak S}$ defined by  $\phi_*(\mathfrak{x}')=\phi\inv(\mathfrak{x}')$ is    continuous.

\item If $\phi$ is  surjective, then the Is\'{e}ki space $\sigma_{\mathfrak S'}$ is homeomorphic to the closed subspace $\mathrm{Ker}(\phi)^{\uparrow}$ of the Is\'{e}ki space $\sigma_{\mathfrak S}.$

\item\label{den} The subset  $\phi_*(\sigma_{\mathfrak S'})$ is dense in $\sigma_{\mathfrak S}$ if and only if $\mathrm{Ker}(\phi)\subseteq \bigcap_{\mathfrak{s}\in \sigma_{\mathfrak S}}\mathfrak{s}.$ 
\end{enumerate}
\end{prop}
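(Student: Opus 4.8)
The plan is to dispatch the three parts separately, each reducing to the order‑reversing calculus of the subbasic closed sets $\mathfrak a^{\uparrow}$ recorded in Proposition~\ref{upp}. For part~\eqref{contxr} I would first observe that the contraction property is exactly what makes $\phi_*$ well defined as a map $\sigma_{\mathfrak S'}\to\sigma_{\mathfrak S}$, since it guarantees $\phi\inv(\mathfrak x')\in\sigma_{\mathfrak S}$ whenever $\mathfrak x'\in\sigma_{\mathfrak S'}$. Continuity need only be tested against a closed subbasis, so it suffices to compute $\phi_*\inv(\mathfrak a^{\uparrow})$ for an arbitrary ideal $\mathfrak a$ of $\mathfrak S$. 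Unwinding definitions,
\[
\phi_*\inv(\mathfrak a^{\uparrow})=\{\mathfrak x'\in\sigma_{\mathfrak S'}\mid \mathfrak a\subseteq \phi\inv(\mathfrak x')\}=\{\mathfrak x'\in\sigma_{\mathfrak S'}\mid \phi(\mathfrak a)\subseteq \mathfrak x'\},
\]
and because $\mathfrak x'$ is an ideal, $\phi(\mathfrak a)\subseteq\mathfrak x'$ holds if and only if the ideal $\langle\phi(\mathfrak a)\rangle$ generated by $\phi(\mathfrak a)$ lies in $\mathfrak x'$. Hence $\phi_*\inv(\mathfrak a^{\uparrow})=\langle\phi(\mathfrak a)\rangle^{\uparrow}$, a subbasic closed set of $\sigma_{\mathfrak S'}$, and $\phi_*$ is continuous. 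This part is routine.

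For part~(2), surjectivity of $\phi$ gives $\phi(\phi\inv(\mathfrak x'))=\mathfrak x'$, so $\phi_*$ is injective, and since $0\in\mathfrak x'$ we always have $\mathrm{Ker}(\phi)=\phi\inv(0)\subseteq\phi\inv(\mathfrak x')$, whence the image lands inside $\mathrm{Ker}(\phi)^{\uparrow}$. To upgrade this to a homeomorphism onto the image I would show that $\phi_*$ is a closed map onto its image by verifying, for an ideal $\mathfrak b'$ of $\mathfrak S'$, the identity
\[
\phi_*\bigl((\mathfrak b')^{\uparrow}\bigr)=\bigl(\phi\inv(\mathfrak b')\bigr)^{\uparrow}\cap \phi_*(\sigma_{\mathfrak S'}),
\]
where the nontrivial inclusion uses $\phi(\phi\inv(S))=S$ for $S\subseteq\mathfrak S'$ (surjectivity) to pass from $\phi\inv(\mathfrak b')\subseteq\phi\inv(\mathfrak x')$ back to $\mathfrak b'\subseteq\mathfrak x'$. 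Combined with part~\eqref{contxr} and injectivity, this makes $\phi_*$ a homeomorphism onto $\phi_*(\sigma_{\mathfrak S'})$.

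The hard part will be identifying this image with \emph{all} of $\mathrm{Ker}(\phi)^{\uparrow}$. In the ring case one takes $\mathfrak x\in\mathrm{Ker}(\phi)^{\uparrow}$, sets $\mathfrak x'=\phi(\mathfrak x)$, and uses $\phi\inv(\phi(\mathfrak x))=\mathfrak x$ because $\phi(a)=\phi(x)$ forces $a-x\in\mathrm{Ker}(\phi)\subseteq\mathfrak x$. This step has no verbatim semiring analogue, as there is no subtraction; so the delicate point is to show both that $\phi(\mathfrak x)$ is again a member of $\sigma_{\mathfrak S'}$ (the image of an ideal of the relevant distinguished type under a surjection) and that $\phi\inv(\phi(\mathfrak x))=\mathfrak x$ for every $\mathfrak x\supseteq\mathrm{Ker}(\phi)$. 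I expect this to require a saturation argument tailored to the spectrum $\sigma_{\mathfrak S}$ rather than a formal one‑liner, and it is the one genuinely semiring‑specific obstruction in the proposition.

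For part~\eqref{den} the necessity direction is clean: since $\phi_*(\sigma_{\mathfrak S'})\subseteq\mathrm{Ker}(\phi)^{\uparrow}$ and the right‑hand side is closed, density forces $\mathrm{Ker}(\phi)^{\uparrow}=\sigma_{\mathfrak S}$, which by the order‑reversing property of part~(1) of Proposition~\ref{upp} reads $\mathrm{Ker}(\phi)\subseteq\bigcap_{\mathfrak s\in\sigma_{\mathfrak S}}\mathfrak s$. For the converse I would note that the hypothesis makes $\mathrm{Ker}(\phi)^{\uparrow}=\sigma_{\mathfrak S}$ and then try to prove $\mathrm{Cl}\bigl(\phi_*(\sigma_{\mathfrak S'})\bigr)=\mathrm{Ker}(\phi)^{\uparrow}$; the inclusion $\subseteq$ is automatic, and the reverse inclusion is where the work concentrates, since one must show the image meets every nonempty basic open neighbourhood, i.e. given ideals $\mathfrak a_1,\dots,\mathfrak a_n$ with $\mathfrak a_i\not\subseteq\mathfrak z$ one must produce $\mathfrak x'\in\sigma_{\mathfrak S'}$ with $\langle\phi(\mathfrak a_i)\rangle\not\subseteq\mathfrak x'$ for all $i$. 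I would handle this by reducing density to the statement that the smallest subbasic closed set containing the image coincides with $\sigma_{\mathfrak S}$, reusing the computation $\phi_*\inv(\mathfrak a^{\uparrow})=\langle\phi(\mathfrak a)\rangle^{\uparrow}$ from part~\eqref{contxr}.
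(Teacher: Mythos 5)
Your part (1) is correct and is exactly the paper's argument: continuity is tested on the closed subbasis, and $\phi_*^{-1}(\mathfrak a^{\uparrow})=\langle\phi(\mathfrak a)\rangle^{\uparrow}$ (you are in fact more careful than the paper, which garbles the notation for the generated ideal). The two places where you stop, however, are genuine gaps: the surjectivity of $\phi_*$ onto $\mathrm{Ker}(\phi)^{\uparrow}$ in part (2), and the sufficiency direction in part (3). You should know that the paper does not close them either. In (2), after establishing exactly what you establish (injectivity, image inside $\mathrm{Ker}(\phi)^{\uparrow}$, closedness), the paper simply writes ``and hence $\mathrm{Im}(\phi_*)=\mathrm{Ker}(\phi)^{\uparrow}$'' with no argument for the reverse inclusion. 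In (3), the paper's key claim is $\mathrm{Cl}(\phi_*(\mathfrak x'^{\uparrow}))=(\phi^{-1}(\mathfrak x'))^{\uparrow}$; it proves the inclusion you prove and then disposes of the reverse one with the sentence ``Since $\phi^{-1}(\mathfrak x'^{\uparrow})=\phi^{-1}(\mathfrak x'^{\uparrow})$, the other inclusion follows,'' which is a tautology. So your instinct about where the real content lies is exactly right, and the paper has no answer to offer you.

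Moreover, the obstruction you flagged in (2) is not merely hard; it is fatal at this level of generality. Take $\mathfrak S=\mathds{N}$, $\mathfrak S'=\mathds{B}=\{0,1\}$ the Boolean semiring ($1+1=1$), and $\phi(0)=0$, $\phi(n)=1$ for $n\geq 1$: a surjective semiring homomorphism with $\mathrm{Ker}(\phi)=\{0\}$. The prime spectrum satisfies the contraction property, yet $\sigma_{\mathds{B}}=\{\{0\}\}$ is a single point while $\mathrm{Ker}(\phi)^{\uparrow}=\{0\}^{\uparrow}$ is all of $\sigma_{\mathds{N}}$, which contains $\{0\}$, $2\mathds{N}$, $3\mathds{N},\dots$; so $\phi_*$ cannot be onto $\mathrm{Ker}(\phi)^{\uparrow}$ and (2) is false as stated. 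The culprit is precisely the missing subtraction you identified: here $\phi^{-1}(\phi(2\mathds{N}))=\mathds{N}\neq 2\mathds{N}$ even though $2\mathds{N}\supseteq\mathrm{Ker}(\phi)$, so no saturation argument can rescue the claim without extra hypotheses on $\sigma$ and $\phi$. For (3), one warning about your plan: reducing density to ``the smallest \emph{subbasic} closed set containing the image equals $\sigma_{\mathfrak S}$'' is not a valid reduction, because closed sets are intersections of \emph{finite unions} of subbasic ones, so the closure of the image can be proper even when no single $\mathfrak a^{\uparrow}\neq\sigma_{\mathfrak S}$ contains the image. The sufficiency direction does go through cleanly when $\mathfrak o'\in\sigma_{\mathfrak S'}$: given $\mathfrak z$ in a basic open set $\sigma_{\mathfrak S}\setminus\bigcup_{i=1}^{n}\mathfrak a_i^{\uparrow}$, the point $\phi_*(\mathfrak o')=\mathrm{Ker}(\phi)$ lies in that same open set, since $\mathfrak a_i\subseteq\mathrm{Ker}(\phi)$ would force $\mathfrak a_i\subseteq\bigcap_{\mathfrak s\in\sigma_{\mathfrak S}}\mathfrak s\subseteq\mathfrak z$, contradicting $\mathfrak a_i\not\subseteq\mathfrak z$. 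Neither you nor the paper has an argument in the stated generality, so the honest conclusion is that parts (2) and (3) need additional hypotheses, not a cleverer proof.
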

   
\begin{proof}      
To show (1), let $\mathfrak{x}$ be an ideal of $\mathfrak{S}$ and $\mathfrak{x}^{\uparrow}$ be a   subbasic closed set of the ideal  space $\sigma_{\mathfrak S}.$ Then  
\begin{align*}
(\phi_*)\inv(\mathfrak{x}^{\uparrow}) &=\{ \mathfrak{x}'\in  \sigma_{\mathfrak S'}\mid \phi\inv(\mathfrak{x}')\in \mathfrak{x}^{\uparrow}\}\\&=\{\mathfrak{x}'\in \sigma_{\mathfrak S'}\mid \phi(\mathfrak{x})\subseteq \mathfrak{x}'\}\\&=\langle\phi(\mathfrak{x}^{\uparrow}\rangle), 
\end{align*} 
and hence the map $\phi_*$  continuous.     
For (2), observe that $\mathrm{Ker}(\phi)\subseteq \phi\inv(\mathfrak{x}')$ follows from the fact that  $\mathfrak{o}\subseteq \mathfrak{x}'$ for all $\mathfrak{x}'\in \sigma_{\mathfrak S'}.$ It can thus been seen that $\phi_*(\mathfrak{x}')\in \mathrm{Ker}(\phi)^{\uparrow},$ and hence $\mathrm{Im}(\phi_*)=\mathrm{Ker}(\phi)^{\uparrow}.$  
If $\mathfrak{x}'\in \sigma_{\mathfrak S'},$ then
$\phi(\phi_*(\mathfrak{x}'))=\phi(\phi\inv(\mathfrak{x}'))=\mathfrak{x}'.$
Thus $\phi_*$ is injective. To show that $\phi_*$ is closed, first we observe that for any   subbasic closed subset  $\mathfrak{a}^{\uparrow}$ of  $\sigma_{\mathfrak S'}$, we have
\begin{align*}
\phi_*(\mathfrak{x}^{\uparrow})&=  \phi\inv(\mathfrak{x}^{\uparrow})\\&=\phi\inv\{ \mathfrak{x}'\in \sigma_{\mathfrak S'}\mid \mathfrak{x}\subseteq   \mathfrak{x}'\}\\&=\phi\inv(\mathfrak{x})^{\uparrow}. 
\end{align*}
Now if $\mathcal K$ is a closed subset of $\sigma_{\mathfrak S'}$ and $\mathcal K=\bigcap_{ \lambda \in \Omega} (\bigcup_{ i \,= 1}^{ n_{\lambda}} \mathfrak{x}_{ i\lambda}^{\uparrow}),$ then
\begin{align*}
\phi_*(\mathcal K)&=\phi\inv \left(\bigcap_{ \lambda \in \Omega} \left(\bigcup_{ i = 1}^{ n_{\lambda}} \mathfrak{x}_{ i\lambda}^{\uparrow}\right)\right)\\&=\bigcap_{ \lambda \in \Omega} \bigcup_{ i = 1}^{x_{\lambda}} \phi\inv(\mathfrak{x}_{ i\lambda}^{\uparrow})
\end{align*}
a closed subset of  $\sigma_{\mathfrak S}.$ Since by (\ref{contxr}), $\phi_*$ is continuous, we have the desired claim.
Finally to prove (3), first we wish to show: $\mathrm{Cl}(\phi_*(\mathfrak{x}'^{\uparrow}))=\phi\inv(\mathfrak{x}'^{\uparrow})$ for all ideals $\mathfrak{x}'$ of $\mathfrak S'$. For that, let $\mathfrak{s}\in \phi_*(\mathfrak{x}'^{\uparrow}).$ This implies $\phi(\mathfrak{s})\in \mathfrak{x}'^{\uparrow},$ and that means $\mathfrak{x}'\subseteq \phi(\mathfrak{s}).$ Therefore, $\mathfrak{s}\in \phi\inv(\mathfrak{x}'^{\uparrow}).$ Since $\phi\inv(\mathfrak{x}'^{\uparrow})=\phi\inv(\mathfrak{x}'^{\uparrow})$, the other inclusion follows. If we take $\mathfrak{x}'$ as the trivial ideal $\mathfrak{o}'$ of $\mathfrak S'$, the above identity reduces to 
$\mathrm{Cl}(\phi_*(\sigma_{\mathfrak S'}))=\mathrm{Ker}(\phi)^{\uparrow},$ and hence  $\mathrm{Ker}(\phi)^{\uparrow}$ to be equal to $\sigma_{\mathfrak S}$ if and only if $\mathrm{Ker}(\phi)\subseteq \bigcap_{\mathfrak{s}\in \sigma_{\mathfrak S}}\mathfrak{s}.$ 
\end{proof}  

Note that for the spectrum of prime ideals, the inclusion condition in (\ref{den}) is replaced by an equality.  If $\phi$ is the quotient map $\mathfrak S\to \mathfrak{S}/\mathfrak{x}$, then we have the following

\begin{cor}
If $\sigma_{\mathfrak S}$ is a spectrum of $\mathfrak{S}$ satisfying the contraction property, then the Is\'{e}ki space $\sigma_{{\mathfrak{S}/\mathfrak{x}}}$ is homeomorphic to the closed subspace
$\mathfrak{x}^{\uparrow}$ of $\sigma_{\mathfrak S}$ for every $\mathfrak{x}$ of $\mathfrak{S}$.   
\end{cor}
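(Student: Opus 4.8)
The plan is to obtain this corollary as the quotient-map instance of the second part of Proposition~\ref{conmap}. That part already supplies a homeomorphism $\sigma_{\mathfrak S'}\cong \mathrm{Ker}(\phi)^{\uparrow}$ for any surjective semiring homomorphism $\phi\colon \mathfrak S\to \mathfrak S'$ when $\sigma_{\mathfrak S}$ has the contraction property; so it remains only to choose $\phi$ appropriately and then rewrite $\mathrm{Ker}(\phi)^{\uparrow}$ as $\mathfrak x^{\uparrow}$.

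First I would take $\phi$ to be the canonical surjection $\pi\colon \mathfrak S\to \mathfrak S/\mathfrak x$ onto the quotient of $\mathfrak S$ by the ideal $\mathfrak x$ (formed via the Bourne congruence). This $\pi$ is a surjective semiring homomorphism, so Proposition~\ref{conmap}(2) applies verbatim and yields a homeomorphism between $\sigma_{\mathfrak S/\mathfrak x}$ and the closed subspace $\mathrm{Ker}(\pi)^{\uparrow}$ of $\sigma_{\mathfrak S}$. The only thing left is the identification $\mathrm{Ker}(\pi)^{\uparrow}=\mathfrak x^{\uparrow}$.

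For that identification, note that $\mathfrak x\subseteq \mathrm{Ker}(\pi)$ by the construction of the quotient, so the order-reversing map of Proposition~\ref{upp}(1) gives $\mathrm{Ker}(\pi)^{\uparrow}\subseteq \mathfrak x^{\uparrow}$ for free. For the reverse inclusion I would take any $\mathfrak s\in \mathfrak x^{\uparrow}$, i.e.\ $\mathfrak s\in \sigma_{\mathfrak S}$ with $\mathfrak x\subseteq \mathfrak s$, and show $\mathrm{Ker}(\pi)\subseteq \mathfrak s$: an element $a$ of $\mathrm{Ker}(\pi)$ satisfies $a+x=x'$ for some $x,x'\in \mathfrak x\subseteq \mathfrak s$, from which one recovers $a\in \mathfrak s$. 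Together the two inclusions give $\mathrm{Ker}(\pi)^{\uparrow}=\mathfrak x^{\uparrow}$, and substituting this into the homeomorphism above completes the argument.

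I expect the reverse inclusion in the last paragraph to be the sole genuine obstacle, precisely because the semiring kernel $\mathrm{Ker}(\pi)$ is in general the $k$-saturation of $\mathfrak x$ rather than $\mathfrak x$ itself, so the recovery of $a\in \mathfrak s$ from $a+x=x'$ is not automatic in the absence of subtraction. One must therefore either restrict to spectra whose members behave subtractively above $\mathfrak x$, or argue that passing from $\mathfrak x$ to its saturation leaves the collection $\mathfrak x^{\uparrow}$ of spectrum elements lying above it unchanged; everything else reduces to a formal appeal to Proposition~\ref{conmap}.
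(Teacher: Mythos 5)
Your route is exactly the paper's route: the paper gives this corollary no separate proof at all, presenting it as the instance of Proposition~\ref{conmap}(2) in which $\phi$ is the quotient map $\mathfrak S\to\mathfrak S/\mathfrak x$, so the only content beyond that proposition is the identification $\mathrm{Ker}(\phi)^{\uparrow}=\mathfrak x^{\uparrow}$, which the paper passes over in silence. The caveat in your last paragraph is therefore not a defect of your write-up relative to the paper but a genuine gap in the corollary as the paper states it: for the Bourne quotient, $\mathrm{Ker}(\pi)$ is the $k$-closure $\overline{\mathfrak x}=\{a\in\mathfrak S\mid a+x=x'\ \text{for some}\ x,x'\in\mathfrak x\}$, and a member $\mathfrak s\in\sigma_{\mathfrak S}$ with $\mathfrak x\subseteq\mathfrak s$ need not contain $\overline{\mathfrak x}$ unless $\mathfrak s$ is subtractive; so the inclusion $\mathfrak x^{\uparrow}\subseteq\mathrm{Ker}(\pi)^{\uparrow}$ can fail, and what Proposition~\ref{conmap}(2) actually delivers is a homeomorphism $\sigma_{\mathfrak S/\mathfrak x}\cong\overline{\mathfrak x}^{\uparrow}$, not onto $\mathfrak x^{\uparrow}$. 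The step in your third paragraph where you ``recover $a\in\mathfrak s$ from $a+x=x'$'' is precisely the step that fails without subtraction, as you yourself then observe. Either of the repairs you name closes the gap cleanly: if $\mathfrak x$ is a $k$-ideal then $\mathrm{Ker}(\pi)=\mathfrak x$ on the nose, and if instead every member of $\sigma_{\mathfrak S}$ is a $k$-ideal then any $\mathfrak s\supseteq\mathfrak x$ absorbs $\overline{\mathfrak x}$ by subtractivity, giving $\overline{\mathfrak x}^{\uparrow}=\mathfrak x^{\uparrow}$; with one of these hypotheses made explicit, your argument is complete and coincides with the paper's intended one.
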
 

%\section*{Compliance with Ethical Standards The following are not applicable with respect to this article.\begin{enumerate}\item Disclosure of potential conflicts of interest.\item Research involving human participants and/or animals.\item Informed consent.\end{enumerate}


\begin{thebibliography}{1}
\bibitem{A08} \textsc{A. Azizi}, Strongly irreducible ideals, \emph{J. Aust. Math. Soc.}, \textbf{84} (2008), 145--154.

\bibitem{A54} \textsc{S. A. Amitsur}, A general theory of radicals III, \textit{Amer. J. Math.},  \textbf{76} (1954), 126--136. 
 
\bibitem{B56} \textsc{E. A. Behrens}, Zur additiven idealtheorie
in nichtassozialiven
ringen,
\textit{Math.
 Z.},  \textbf{64} (1956),
169--182.

\bibitem{B72} \textsc{N. Bourbaki}, \emph{Elements of mathematics:
Commutative
Algebra}, Addison-Wesley, Reading, MA, 1972.    
  
\bibitem{BM58} \textsc{B. Brown} and \textsc{N. H. McCoy}, Prime ideals in nonassociative rings. \textit{Trans. Amer. Math. Soc.}, \textbf{89} (1958), 245--255.

\bibitem{DST19}  \textsc{M. Dickmann}, \textsc{N. Schwartz}, and \textsc{M. Tressal},  \emph{Spectral spaces}, Cambridge Univ. Press, 2019. 

\bibitem{DG22} \textsc{T. Dube} and \textsc{A. Goswami}, Ideal spaces: an extension of structure spaces of rings, \textit{J. Alg. Appl.} (online ready), 2022. 
 
\bibitem{FGS22} \textsc{C. A. Finocchiaro}, \textsc{A. Goswami}, and \textsc{D. Spirito}, Distinguished classes of ideal spaces and their topological properties, \textit{Comm. Alg}., 2022 (online ready).

\bibitem{GJ60} 
\textsc{L. Gillman} and \textsc{M. Jerison}, \emph{Rings of continuous functions}, D. Van Nostrand Company, Inc., 1960. 
 
\bibitem{G99} \textsc{J. S. Golan}, \textit{Semirings and their applications}, Springer, 1999.

\bibitem{G60} 
\textsc{A. Grothendieck},   \emph{\'{E}l\'{e}ments de g\'{e}om\'{e}trie alg\'{e}brique I, Le langage des sch\'{e}mas}, Inst. Hautes \'{E}tudes Sci. Publ. Math., No. \textbf{4}., 1960. 

\bibitem{GK39} 
\textsc{I. Gelfand} and \textsc{A. Kolmogoroff}, On rings of continuous functions on topological
spaces, \textit{C. R. (Dokl.) Acad. Sci. URSS, n. Ser.}, \textbf{22}  (1939), 11--15. 
 
\bibitem{GS41} \textsc{I. Gelfand} and \textsc{G. \v{S}ilov}, 
\"{U}ber verschiedene Methoden der Einf\"{u}hrung der topologie in die menge der maximalen ideale eines normierten ringes,
\emph{Math. Sbornik},  \textbf{9}(51) (1941), 25--39. 

\bibitem{G02}
K. G\l azek,
\textit{A guide to the literature on semirings and their applications in mathematics and information sciences, With complete bibliography}, Kluwer Academic Publishers, 2002.

\bibitem{G et. al.} \textsc{G. Gierz}, \textsc{K. H. Hofmann}, \textsc{K. Keimel}, \textsc{J. D. Lawson},
\textsc{M. Mislove}, and \textsc{D. S. Scott},  \emph{Continuous lattices and domains},
Cambridge Univ. Press, 2003. 

\bibitem{HJ65} \textsc{M. Henriksen} and \textsc{M. Jerison},  The space of minimal prime ideals of a commutative ring, \textit{Trans. Amer. Math. Soc.}, \textbf{115} (1965), 110--130.

\bibitem{HKKN18}
\textsc{K. Hila}, \textsc{S. Kar}, \textsc{Sh. Kuka}, and \textsc{K. Naka}, Some structure spaces of generalized
semirings. \textit{Filomat} \textbf{32}(13) (2018), 4461--4472.
 
\bibitem{H71} \textsc{M. Hochster}, The minimal prime spectrum of a commutative ring, \emph{Canad. J. Math.}, \textbf{XXIII}(5)  (1971), 749--758.

\bibitem{I59} \textsc{K. Ilzuka}, On the Jacobson radical of a semiring, \textit{Tohuku Math. J.}, \textbf{11} (1959), 409--421. 

\bibitem{IM56v}
\textsc{K. Is\'{e}ki} and \textsc{Y. Miyanaga}, Notes on topological spaces. III. On space of
maximal ideals of semiring, \textit{Proc. Japan Acad.}, \textbf{32} (1956), 325-328 .


\bibitem{I56v}
\textsc{K. Is\'{e}ki}, Notes on topological spaces. V. On structure spaces of semiring,
\textit{Proc. Japan Acad.}, \textbf{32} (1956), 426--429. 

\bibitem{I56} \bysame, Ideal theory of semiring , \textit{Proc. Japan. Acad.}, \textbf{32} (1956), 554--559.

\bibitem{J45} \textsc{N. Jacobson},
A topology for the set of primitive ideals in an arbitrary ring, \textit{Proc.
Nat. Acad. Sei. U.S.A.}, \textbf{31} (1945), 333--338.

\bibitem{J56} \bysame, \textit{Structure of rings}, Amer. Math. Soc. Colloquium Publications,
vol. \textbf{37}, Providence, 1956.

\bibitem{L53}
\textsc{L. H. Loomis}, \emph{An Introduction to
abstract harmonic analysis}, D. Van Nostrand Company, Inc., 1953.  
 
\bibitem{M53} \textsc{J. D. McKnight, Jr.}, \emph{On the characterisation of rings of functions}, Purdu doctoral thesis, 1953. 

\bibitem{N18} \textsc{P. Nasehpour}, Some remarks on ideals of commutative semirings, \textit{Quasigroups and Related Systems}, \textbf{26}
 (2018), 281--298.
  
\bibitem{S39}
\textsc{G. \v{S}ilov},  Ideals and subrings of the rings of continuous functions, \textit{C. R. (Dokl.) Acad. Sci. URSS, n. Ser.},  
\textbf{22} (1939), 7--10.  
 
\bibitem{S37}     
\textsc{M. H. Stone},    Applications of the theory of Boolean rings to general topology, \textit{Trans. 
Am. Math. Soc.}, \textbf{41} (1937), 375--481.
  
\bibitem{SZ55} \textsc{W. Slowikowski} and \textsc{W. Zawadowski}, A generalisation of maximal ideals method
of Stone and Gelfand, \textit{Fund. Math.}, \textbf{42} (1955), 216--231.

\bibitem{V34}
\textsc{H. S. Vandiver}, Note on a simple type of algebra in which the cancellation law
of addition does not hold, \textit{Bull. Am. Math. Soc.}, \textbf{40}(12) (1934), 914--920.

\bibitem{V39}
\bysame, On some simple types of semi-rings, \textit{Am. Math. Monthly}, \textbf{46}(1) (1939), 22--26.

\end{thebibliography}
\end{document}